\renewcommand{\Pr}[1]	{\mathbf{P}\!\bm{\left(} #1 \bm{\right)}}
\newcommand{\N}     	{\mathbb{N}}
\newcommand{\eps}		{\varepsilon}
\newcommand{\cupdot}	{\mathbin{\mathaccent\cdot\cup}}
\title{On the power of choice for Boolean functions\thanks{
%Submitted to the editors \today.
\funding{Dieter Mitsche has been supported by grant GrHyDy ANR-20-CE40-0002 and by IDEXLYON of Universit\'{e} de Lyon (Programme Investissements d'Avenir ANR16-IDEX-0005).}}}
\author{Nicolas Fraiman\thanks{University of North Carolina, Chapel Hill, NC, United States
  (\email{fraiman@email.unc.edu}).}
\and Lyuben Lichev\thanks{Institut Camille Jordan, Univ.~Lyon 1, Lyon, France. Univ.~Jean Monnet, Saint-Etienne, France
  (\email{lyuben.lichev@univ-st-etienne.fr}, \email{dmitsche@gmail.com}).}
\and Dieter Mitsche\footnotemark[3]}
\begin{document}

\maketitle

\begin{abstract}
    In this paper we consider a variant of the well-known Achlioptas process for graphs adapted to monotone Boolean functions. Fix a number of choices $r\in \mathbb N$ and a sequence of increasing functions $(f_n)_{n\ge 1}$ such that, for every $n\ge 1$, $f_n:\{0,1\}^n\mapsto \{0,1\}$. Given $n$ bits which are all initially equal to 0, at each step $r$ 0-bits are sampled uniformly at random and are proposed to an agent. Then, the agent selects one of the proposed bits and turns it from 0 to 1 with the goal to reach the preimage of 1 as quickly as possible. We nearly characterize the conditions under which an acceleration by a factor of $r(1+o(1))$ is possible, and underline the wide applicability of our results by giving examples from the fields of Boolean functions and graph theory.
\end{abstract}

\begin{keywords}
Boolean function, power of choice, threshold, hitting probability, relevant variable, Achlioptas process, randomized algorithm
\end{keywords}

\begin{AMS}
94D10, 06E30, 68W20, 60G99, 68Q87, 68R01
\end{AMS}

%=======================================
\section{Introduction}

The ``power of two choices'' was introduced by Azar, Broder, Karlin and Upfal~\cite{ABKU} in the context of load balancing. They showed that, when randomly allocating $n$ balls into $n$ bins, a dramatic decrease in the maximum load is achieved by sequentially selecting the less full bin among two random options. Many variations on this basic model have been analyzed. Berenbrink, Czumaj, Steger and V\"ocking~\cite{BCSV} studied the case when a much larger number of balls is placed. Kenthapadi and Panigrahy~\cite{KP} restricted the options by placing balls in an endpoint of a random edge from a graph. More recently, Redlich~\cite{Red} studied the case where you want to ``unbalance'' and select the fullest bin.

A classical and well-studied setting is the \emph{Erd\H{o}s-R\'enyi graph process} where the edges of the complete graph $K_n$ arrive one by one according to a uniform random permutation. The power of choice in this context was introduced by Achlioptas: he was interested in the question of delaying certain monotone graph properties with respect to the original process if at each step, $r \ge 2$ edges instead of one are proposed and an agent may choose the one they need more for their purposes (we call this variation the \emph{$r$--choice process})\footnote{At the last $r-1$ steps, all 0-bits are proposed.}. In two related papers Bohman and Frieze~\cite{BF} and Spencer and Wormald~\cite{SW} studied the problem of delaying the appearance of a giant component by the $r$--choice process. Krivelevich, Loh and Sudakov~\cite{KLS} studied rules to avoid small subgraphs. Achlioptas, D'Souza and Spencer~\cite{ADS} claimed that certain rules could make the giant transition discontinuous but Riordan and Wernke~\cite{RW} proved that was not the case. A more restrictive version where the agent's decisions cannot depend on the previous history and only one vertex from the random edges is revealed was studied by Beveridge, Bohman, Frieze, and Pikhurko~\cite{BBFP}. A similar restrictive model is the so called semi-random graph process, where one vertex is chosen randomly and the agent can choose the second vertex arbitrarily, see the paper of Ben-Eliezer, Hefetz, Kronenberg, Parczyk, Shikhelman and Stojakovi\'{c}~\cite{BenEliezer}. When the goal is to expedite rather than delay certain properties, Krivelevich and Spöhel~\cite{KS} proved general upper and lower bounds on the threshold to create a copy of some fixed graph $H$ in the $r$--choice process. Recently, the question of acceleration of the appearance of a Hamilton Cycle or a Perfect Matching was treated by Krivelevich, Lubetzky and Sudakov~\cite{Krive} who proved that there exist strategies that accelerate both properties by a factor of $r+o(1)$.
Furthermore, outside of the graph setup, Sinclair and Vilenchik~\cite{SV} turned particular attention to delaying the satisfiability of the random 2-SAT formula, and Perkins~\cite{WP} considered a $k$-SAT version of the problem. 

In their seminal work, Erd\H{o}s and R\'enyi \cite{ER} showed that many interesting graph properties exhibit sharp thresholds, that is, the probability that a random graph with $n$ vertices and $m$ edges has the property increases from values very close to 0 to values close to 1 in a very small interval around a certain critical value of the number of edges $m$ (often called a critical window). Later, Bollob\'as and Thomason \cite{BT} proved the existence of threshold functions for all monotone graph properties. A more careful analysis of the size of the critical window was performed by Friedgut and Kalai \cite{FK}.

Their arguments generalize in a straightforward way to thresholds of monotone Boolean functions. More precisely, for any $n \ge 1$, consider the hypercube $\{0,1\}^n$ with the probability measure $\mu_p(x_1,\dots,x_n) = p^k (1-p)^{n-k}$ where $k=x_1+\cdots+x_n$. Let $(A_n)_{n \ge 1}$ be a sequence of monotone sets such that, for every $n\ge 1$, $A_n\subseteq \{0,1\}^n$ and $A_n$ is invariant under a transitive permutation group of $\{1,2,\dots,n\}$. If $\mu_p(A_n) > \epsilon$, then Bollob\'as and Thomason \cite{BT} showed that there is $c(\eps) > 0$ such that $\mu_q(A_n) > 1-\epsilon$ for $q = c(\epsilon)p$. This result was improved by Friedgut and Kalai \cite{FK} to $\mu_q(A_n) > 1-\epsilon$ for $q = p + c \log(1/2\epsilon)/\log n$, where $c$ is an absolute constant. We say that a function is a \emph{sharp threshold function} for the sequence of monotone subsets $(A_n)_{n \ge 1}$ if, for every $\eps > 0$, the probability $p_n$ such that $\mu_{p_n}(A_n)=\eps$ and the probability $q_n$ such that $\mu_{q_n}(A_n)=1-\eps$ satisfy $p_n = (1+o(1))q_n$. Then, the threshold function is given only up to a $(1+o(1))$ factor by both $(p_n)_{n\ge 1}$ and $(q_n)_{n\ge 1}$ for any fixed $\eps > 0$. Equivalently, the hitting time of the event $A_n$ by the process that turns from 0 to 1 the $n$ given bits one by one in an order, chosen uniformly at random, is of order $(1+o(1))p_n$ asymptotically almost surely. Sharp thresholds appear in various systems in combinatorics, computer science and statistical physics (where they are more widely known as phase transitions).

Motivated by all these questions, we embark in the study of the power of choice for Boolean functions. Our goal is to characterize Boolean functions whose thresholds can be maximally accelerated. More precisely, we study the $r$--choice process for Boolean functions where at each step an agent is presented with $r$ zero coordinates and selects one to flip (here and below, $r\ge 1$ is a fixed positive integer). Our objective is to understand which monotone Boolean functions can be accelerated by a factor of $r$ by the $r$--choice process (as we shall see in a bit, the factor $r$ is optimal). For that purpose, we compare the hitting probabilities for the function to reach the value $1$ under two increasing random walks on the hypercube.

The paper is organized as follows. In Section \ref{sec:res} we introduce the model of interest, state the assumptions and present the main results of the paper, which are then proved in Section \ref{sec:proofs}. Section \ref{sec:app} contains concrete applications of our results.

%=======================================
\section{Statements of results}\label{sec:res}

We use the following standard asymptotic notation: for two sequences of functions $(a_n)_{n \ge 1}$ and $(b_n)_{n \ge 1}$ we say that $a_n=O(b_n)$ if there exists $C > 0$ and $n_0 \in \mathbb{N}$ such that, for all $n\ge n_0$, $|a_n|\le C|b_n|$; $a_n=\Omega(b_n)$ if $b_n=O(A_n)$; $a_n=\Theta(b_n)$ if $a_n=O(b_n)$ and $b_n=\Omega(a_n)$; $a_n=o(b_n)$ or equivalently $a_n \ll b_n$ if $\lim_{n \to \infty} \frac{|a_n|}{|b_n|}=0$; and $a_n=\omega(b_n)$ if $b_n=o(a_n)$. In case the limit is taken with respect to a different variable $k$, we use the notation $o_k(b_n)$, $\Omega_k(b_n)$, etc. to point this out. We also say that a sequence of events $(E_n)_{n \ge 1}$ holds a.a.s.\ (or asymptotically almost surely), if $\lim_{n \to \infty} \Pr{E_n}=1$.

Fix any $n\in \mathbb N$. A \emph{Boolean} function $f$ maps elements from the hypercube $\{0,1\}^n$ to $\{0,1\}$. We denote the vectors in $\{0,1\}^n$ by lower case letters in bold such as $\bm{u}, \bm{v}, \bm{w},$ etc. For a vector $\bm{x}$, we denote by $|\bm{x}|$ the number of coordinates of $\bm{x}$. We denote by $\bm{0}$ the all zeroes vector and by $\bm{1}$ the all ones vector. 

We will see the hypercube as a partially ordered set equipped with the order relation $\le$ defined by $\bm{x}\le \bm{y}$ if $\bm{x}_i\le \bm{y}_i$ for every $i \in [n]$. At the same time, construct an oriented edge between every pair of vectors $\bm{x},\bm{y}\in \{0,1\}^n$ such that $\bm{x}\le \bm{y}$, and $\bm{x}$ and $\bm{y}$ differ in exactly one coordinate - this allows us in turn to see the hypercube as a directed graph.

A Boolean function is \emph{monotone} if $\bm{x}\le \bm{y}$ implies $f(\bm{x})\le f(\bm{y})$.

\begin{definition}
    A variable $i$ is \emph{relevant} for $f$ if there exist inputs $\bm{x},\bm{y}\in\{0,1\}^n$ which differ only in coordinate $i$ and $f(\bm{x})\neq f(\bm{y})$; in this case we also say that $f$ \emph{depends on} the $i$-th variable. The \emph{relevant set of $f$}, denoted by $R(f)$, is the set of variables relevant for $f$.
\end{definition}

\begin{definition}
The \emph{relevant contraction} of a Boolean function $f$, denoted by $\tilde f$, is the function obtained by restricting $f$ to its relevant set. In other words, if $f:\{0,1\}^n \to \{0,1\}$ and $R(f)=\{i_1,\dots,i_m\}$, then $\tilde{f}:\{0,1\}^m\to\{0,1\}$ is defined as $\tilde{f}(\bm{x}) = f(\bm{y})$ where $\bm{y}_{i_j} = \bm{x}_j$ for $j=1,\dots,m$, and for every $i\in [n]\setminus \{i_1,\dots,i_m\}, \bm{y}_i$ is an arbitrary bit. 
\end{definition}

We will be interested in two random walks on the (directed) hypercube $\{0,1\}^n$. The simple random walk $(\bm{X}_t)_{t=0}^n$ starts at $\bm{X}_0 = \bm{0}$ and evolves by choosing a directed edge uniformly at random and moves in its direction at each step. In the $r$--choice walk $(\bm{Y}_t)_{t=0}^{n}$ starting from $\bm{Y}_0 = \bm{0}$, an agent is presented with $r$ zero bits chosen uniformly at random, selects one of them and moves in its direction (in the end when there are fewer than $r$ possible edges, we assume that all zero bits are proposed). Formally, for every integer $t\in [0,n-r]$, let $Z_t$ be the set of zero coordinates in $\bm{Y}_t$, and let $C_t$ be the random subset of $Z_t$ of size $r$, presented to the agent at step $t$. Then, the agent selects $c_t\in C_t$ according to some policy and updates the set of zero coordinates $Z_{t+1} = Z_t \setminus \{c_t\}$. Given a monotone Boolean function $f$, we will study the hitting times of the preimage $f^{-1}(1)\subset \{0,1\}^n$ by the two processes $(\bm{X}_t)_{t=0}^n$ and $(\bm{Y}_t)_{t=0}^n$ (at this moment we say that the function $f$ is \emph{activated}).

\begin{definition}\label{def:thresholds}
    The \emph{solo} and the \emph{$r$--choice thresholds} are given by
    \begin{align*}
        T_1(f) &= \min\big\{t: \Pr{f(\bm{X}_t)=1} \ge 1/2\big\},\\
        T_r(f) &= \min\big\{t: \Pr{f(\bm{Y}_t)=1} \ge 1/2\big\}.
    \end{align*}
\end{definition}

In this paper, when we are talking about a sequence of Boolean functions $(f_n)_{n\ge 1}$, we will always assume that $f_n:\{0,1\}^n\to \{0,1\}$ is monotone unless explicitly mentioned otherwise. The main question we consider is if one may asymptotically accelerate by a factor $r$ the threshold values for the $r$--choice process (unless explicitly stated otherwise, all asymptotics refer to the regime $n\to +\infty$).

\begin{definition}
    A sequence of functions $(f_n)_{n\ge 1}$ is \emph{fast} if
    \begin{equation*}
        T_r(f_n) = (1+o(1)) \frac{T_1(f_n)}{r}.
    \end{equation*}    
    A sequence is \emph{slow} if it is not fast.
\end{definition}

Notice that the constant $r$ is best possible: indeed, define the $r$--complete process to be the process, in which one changes all $r$ uniformly chosen remaining zeros to $1$ at the same time. This process performs only a $1/r$-fraction of the time steps of the single choice process, and is at least as fast as the $r$--choice process.

We need one more definition that allows us to formalize the concept that relevant sets of variables might change over the process. For every $n\ge 1$, the sequence of functions $(f^s_n)_{s\ge 0}$ is defined conditionally on the sequence of updated bits $(b_s)_{s\ge 1}$ as follows. Order the first $s$ bits in increasing order $b_{i_1} < \dots < b_{i_s}$. For every integer $s\in [0,n]$ and a vector $\bm{v}\in \{0,1\}^{n-s}$, define
\begin{equation*}
\bm{v}_s^\uparrow = (\bm{v}_1,\dots, \bm{v}_{b_{i_1}-1}, 1, \bm{v}_{b_{i_1}},\dots , \bm{v}_{b_{i_2}-2}, 1, \bm{v}_{b_{i_2}-1},\dots , \bm{v}_{b_{i_s}-s}, 1, \bm{v}_{b_{i_s}-s+1},\dots , \bm{v}_{n-s}).
\end{equation*}
Define $f^s_n: \bm{v}\in \{0,1\}^{n-s}\to f_n(\bm{v}_s^\uparrow)\in \{0,1\}$. In particular, $f^0_n = f_n$. Observe that $(|R(f^s_n)|)_{s = 0}^n$ is a non-increasing sequence since for any fixed integer $s\in [0,n-1]$, if a position $i$ is not in the set $R(f^s_n)$, then it remains outside the set $R(f^{s+1}_n)$ as well.

We now present our main results. Throughout we fix an integer $r\ge 2$. We first state two sufficient conditions for a sequence $(f_n)_{n\ge 1}$ to be slow.

\begin{theorem}\label{thm:slow}
If there is $\eps > 0$ such that $T_1(f_n)\ge \eps n$ for every $n\ge 1$. Then, there exists a constant $C = C(r) > 0$ such that, for every $n\ge 1$, $T_r(f_n)\ge C n + T_1(f_n)/r$.
\end{theorem}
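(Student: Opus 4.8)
I would bound $\Pr{f_n(\bm Y_t)=1}$ for a suitable $t$ by comparing the $r$--choice walk to the simple walk \emph{through the set of coordinates proposed to the agent}. Write $\mathcal P_t=C_0\cup\dots\cup C_{t-1}$ for this set (with $\mathcal P_0=\emptyset$) and $P_t=|\mathcal P_t|$. Since the agent can only flip a proposed coordinate, the support $S_t$ of $\bm Y_t$ satisfies $S_t\subseteq\mathcal P_t$, so by monotonicity $f_n(\bm Y_t)\le f_n(\bm w)$, where $\bm w$ has support $\mathcal P_t$. The structural point, valid for \emph{any} policy, is that the conditional law of $\mathcal P_t$ given $\{P_t=m\}$ is uniform on $\binom{[n]}{m}$: this is proved by induction on $t$ (simultaneously showing that $\mathrm{Law}(P_t)$ does not depend on the policy), using that the pool $Z_t=[n]\setminus S_t$ has deterministic size $n-t$, that $C_t$ is uniform in $\binom{Z_t}{r}$, and that the elements of $Z_t\setminus\mathcal P_t$ are exchangeable given the history. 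Hence, with $g_n(m):=\Pr{f_n(\bm X_m)=1}$ (non-decreasing in $m$), one gets $\Pr{f_n(\bm Y_t)=1}\le \Ex{g_n(P_t)}$.

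\textbf{Size of $\mathcal P_t$.}
Conditioning on the history $\mathcal H_t$, a uniform $r$--subset of $Z_t$ meets the $n-P_t$ never--proposed coordinates in $r(n-P_t)/(n-t)$ of them in expectation, so $\Ex{n-P_{t+1}\mid\mathcal H_t}=(n-P_t)\bigl(1-\tfrac r{n-t}\bigr)$; iterating,
\begin{equation*}
\Ex{P_t}=n\Bigl(1-\prod_{i=0}^{t-1}\bigl(1-\tfrac r{n-i}\bigr)\Bigr)=n\bigl(1-(1-t/n)^r\bigr)+O(1).
\end{equation*}
Moreover $P_t$ is the endpoint of a martingale with increments in $\{0,1,\dots,r\}$, so Azuma--Hoeffding gives $\Pr{P_t\ge\Ex{P_t}+\lambda}\le\exp(-\lambda^2/(2r^2n))$.

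\textbf{Choice of $t$ and conclusion.}
Because $\alpha\mapsto1-(1-\alpha)^{1/r}-\alpha/r$ vanishes at $0$ and is strictly increasing on $[0,1]$ (as $r\ge2$), the constant $C=C(r,\eps):=\tfrac12\bigl(1-(1-\eps)^{1/r}-\eps/r\bigr)$ is positive. Put $t^\star=\lceil Cn+T_1(f_n)/r\rceil-1$. Writing $T_1(f_n)=\alpha n$ with $\alpha\ge\eps$ and using monotonicity of the above function and of $t\mapsto\Ex{P_t}$, a short computation yields $\Ex{P_{t^\star}}\le T_1(f_n)-\delta n$ for some $\delta=\delta(r,\eps)>0$; with the concentration bound, $P_{t^\star}\le T_1(f_n)-\tfrac\delta2 n$ outside an event of probability $e^{-\Omega(n)}$. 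It remains to see that $g_n$ is bounded away from $\tfrac12$ at $T_1(f_n)-\tfrac\delta2 n$. If $g_n(T_1(f_n)-\tfrac\delta2 n)\le\tfrac14$ this is clear. Otherwise $g_n\in(\tfrac14,\tfrac12)$ on all of $[\,T_1(f_n)-\tfrac\delta2 n,\ T_1(f_n)-1\,]$, and were $g_n$ even within $\eta$ of $\tfrac12$ there, then via $\mu_q(f_n)=\Ex{g_n(\mathrm{Bin}(n,q))}$ and concentration of $\mathrm{Bin}(n,q)$ on scale $\sqrt n\log n$, $\mu_q(f_n)$ would stay in a window of width $\eta+o(1)$ over a $q$--interval of length $\tfrac\delta2-o(1)$, contradicting the bound $\tfrac{d}{dq}\mu_q(f_n)\ge c_0$ valid for monotone functions with $\mu_q(f_n)\in[\tfrac14,\tfrac34]$ (Margulis--Russo together with the edge--isoperimetric inequality) as soon as $\eta<c_0\delta/4$. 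So $g_n(T_1(f_n)-\tfrac\delta2 n)\le\tfrac12-\eta$ for some $\eta=\eta(r,\eps)>0$, giving $\Pr{f_n(\bm Y_{t^\star})=1}\le\Ex{g_n(P_{t^\star})}\le(\tfrac12-\eta)+e^{-\Omega(n)}<\tfrac12$ for large $n$. Hence $T_r(f_n)>t^\star$, i.e.\ $T_r(f_n)\ge Cn+T_1(f_n)/r$; the finitely many small $n$ are absorbed by shrinking $C$ (recall $T_r(f_n)\ge T_1(f_n)/r$ always, since the $r$--complete process is at least as fast).

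\textbf{Main obstacle.}
The delicate step is the last one: the concentration error $e^{-\Omega(n)}$ dwarfs the only ``soft'' bound $\tfrac12-g_n(T_1-1)\ge2^{-n}$ coming from rationality, so one genuinely needs the quantitative isoperimetric input to know that $g_n$ is already a constant below $\tfrac12$ at a point a constant fraction of $n$ below $T_1(f_n)$; establishing the exchangeability of $\mathcal P_t$ for arbitrary history-- and $f_n$--dependent policies also requires a careful (though conceptually routine) induction.
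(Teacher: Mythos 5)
Your proposal is correct, and at its core it runs on the same mechanism as the paper's proof of Theorem~\ref{thm:slow}: because of repeated proposals, after $t$ steps the set of coordinates ever proposed has size about $n(1-(1-t/n)^r)$ rather than $rt$; this set is uniform given its size for any policy; and since the agent can only flip proposed coordinates, activation at time $t$ is dominated by activation of a uniform set whose size sits a linear-in-$n$ amount below $T_1(f_n)$ when $t$ is just below $Cn+T_1(f_n)/r$. The paper packages this as a contradiction argument: assuming $rT_r(f_{n_k})=(1+o_k(1))T_1(f_{n_k})$ along a subsequence, it uses Lemma~\ref{lem:collision} (a second-moment estimate for never-proposed coordinates) to get that at most $T_1(f_{n_k})-cn_k+o_k(n_k)$ distinct bits are proposed by time $T_r$, and then concludes from ``a uniform set of size below $T_1$ activates with probability less than $1/2$''. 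You argue directly: exact computation of $\Ex{P_t}$, Azuma concentration, and an explicit constant $C(r,\eps)$. The genuinely additional ingredient in your write-up is the final step: the definition of $T_1$ only gives $g_n(m)<1/2$ for $m<T_1$ (conceivably $1/2-2^{-n}$), and such a strict inequality does not survive the a.a.s./concentration error terms; you supply the quantitative fact $g_n(T_1-\Theta(n))\le 1/2-\Omega(1)$ via Margulis--Russo combined with the Poincar\'e bound $\frac{d}{dq}\mu_q\ge c_0$ when $\mu_q\in[1/4,3/4]$. The paper's proof is terse at exactly this point, so your version is more airtight, at the cost of importing isoperimetric machinery; your induction establishing uniformity of the proposed set for arbitrary (label- and history-dependent) policies is likewise a more explicit treatment of a step the paper asserts without proof. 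Two small caveats: your $C$ necessarily depends on $\eps$ as well as on $r$ (as does the paper's), and the remark that finitely many small $n$ are ``absorbed by shrinking $C$'' needs $T_r>T_1/r$ strictly, which can fail for small $n$ (e.g.\ an OR over a third of the coordinates with $r=2$ gives $T_r=T_1/r$); this only reflects that the statement's ``for every $n\ge 1$'' must be read asymptotically, since the paper's own argument also yields the bound only for all sufficiently large $n$.
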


\begin{corollary}\label{cor:slow}
If $|R(f_n)| = \omega(1)$, and there is $\delta > 0$ such that $T_1(\tilde{f}_n)\ge \delta |R(f_n)|$ for every $n\ge 1$.
Then, there exists a constant $C = C(r) > 0$ such that, for every $n\ge 1$, $T_r(f_n)\ge C n + T_1(f_n)/r$.
\end{corollary}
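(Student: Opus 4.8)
The plan is to deduce Corollary~\ref{cor:slow} from Theorem~\ref{thm:slow} by showing that its hypotheses already force $T_1(f_n)\ge \eps n$ for every $n$ and a suitable constant $\eps=\eps(\delta)>0$; Theorem~\ref{thm:slow} then applies directly. (We may assume every $f_n$ is non-constant, so that $m:=m(n)=|R(f_n)|\ge 1$ for all $n$; otherwise the statement is trivial or vacuous.) The starting point is that $\bm{X}_t$ is a uniformly random weight-$t$ vector and $f_n(\bm{X}_t)$ depends only on $\bm{X}_t$ restricted to $R(f_n)$, which, conditionally on $H_t:=|\bm{X}_t\cap R(f_n)|$, is a uniformly random weight-$H_t$ vector on $R(f_n)$. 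Hence
\begin{equation*}
\Pr{f_n(\bm{X}_t)=1}=\Ex{g(H_t)},\qquad H_t\sim\mathrm{Hypergeometric}(n,m,t),
\end{equation*}
where $g(k):=\Pr{\tilde f_n(\bm{X}^{(m)}_k)=1}$ is the non-decreasing threshold profile of $\tilde f_n$, which satisfies $g(k)<1/2$ for all $k<T_1(\tilde f_n)$.

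Because $T_1(\tilde f_n)\ge\delta m$, the value $g(\lceil\delta m\rceil-1)$ lies strictly below $1/2$, and since $g(k)$ is a rational number with denominator $\binom{m}{k}\le 2^m$, this gives a quantitative gap $\beta:=\tfrac12-g(\lceil\delta m\rceil-1)\ge 2^{-m-1}$. I would then fix $\eps=\eps(\delta):=\tfrac12\,8^{-1/\delta}$ and take any integer $t<\eps n$, so $t/n<\eps<\delta$. A union bound over $\lceil\delta m\rceil$-subsets of $R(f_n)$ bounds the hypergeometric upper tail by $\Pr{H_t\ge\delta m}\le\binom{m}{\lceil\delta m\rceil}(t/n)^{\lceil\delta m\rceil}\le 2^m\eps^{\delta m}=2^{-(2+\delta)m}<2^{-m-1}\le\beta$, while on the complementary event $H_t\le\lceil\delta m\rceil-1$, whence $g(H_t)\le\tfrac12-\beta$. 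Combining, $\Pr{f_n(\bm{X}_t)=1}=\Ex{g(H_t)}\le(\tfrac12-\beta)+\Pr{H_t\ge\delta m}<\tfrac12$, so $T_1(f_n)\ge\eps n$ for every $n$. Applying Theorem~\ref{thm:slow} with this $\eps$ then yields $T_r(f_n)\ge Cn+T_1(f_n)/r$, as claimed.

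The one genuinely delicate point — and the step I expect to be the crux — is the choice of $\eps$. Since $g$ may approach $1/2$ from below exponentially fast in $m$, one cannot control $\Ex{g(H_t)}$ by evaluating $g$ near the mean $\eps m$ of $H_t$; the comparison must instead be made at the level $\delta m$, which sits a constant factor above the mean. There two exponential scales must be matched: the hypergeometric tail $\Pr{H_t\ge\delta m}$, whose decay rate tends to $\infty$ as $\eps\to 0$, and the gap $\beta\ge 2^{-m-1}$ coming from the rationality of $g$. Making the former dominate the latter is exactly what forces $\eps$ down to (a suitable multiple of) $8^{-1/\delta}$. Everything else — the identity $\Pr{f_n(\bm{X}_t)=1}=\Ex{g(H_t)}$, the monotonicity of $g$, and the tail bound — is routine.
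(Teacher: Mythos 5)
Your proof is correct, but it takes a genuinely different route from the paper. The paper also reduces to Theorem~\ref{thm:slow}, but verifies its hypothesis by a case split: if $|R(f_n)|\ge n/8$ then $T_1(f_n)\ge T_1(\tilde f_n)\ge \delta n/8$ directly, and otherwise it shows, via stochastic domination of the number of relevant bits hit in the first $\delta n/8$ steps by a $\mathrm{Bin}(\delta n/8,\,2|R(f_n)|/n)$ variable and Chernoff's bound, that a.a.s.\ fewer than $\delta|R(f_n)|/2 < T_1(\tilde f_n)$ relevant bits are selected; this is where the hypothesis $|R(f_n)|=\omega(1)$ enters, and it yields the bound $T_1(f_n)\ge (\delta/8)n$ only asymptotically. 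You instead prove a deterministic, non-asymptotic bound $T_1(f_n)\ge \eps(\delta) n$ valid for every $n$ with $|R(f_n)|\ge 1$, by writing the activation probability as $\Ex{g(H_t)}$ with $H_t$ hypergeometric, exploiting that the level profile $g$ is monotone and rational with denominator at most $2^{m}$ (so it sits at least $2^{-m-1}$ below $1/2$ strictly before the threshold), and beating this gap with a union-bound tail estimate by taking $\eps$ exponentially small in $1/\delta$. What each approach buys: the paper's constant $\eps=\delta/8$ is much better than your $\tfrac12 8^{-1/\delta}$, but your argument dispenses with the hypothesis $|R(f_n)|=\omega(1)$ altogether and, more importantly, handles carefully the delicate point that $g$ may be exponentially close to $1/2$ below the threshold --- the paper's proof adds an a.a.s.\ error term to a quantity known only to be strictly less than $1/2$, a step your quantitative gap-versus-tail comparison makes rigorous. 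The only loose end is your parenthetical dismissal of constant $f_n$ (if some $f_n\equiv 1$ the conclusion for that $n$ fails for any $C>0$), but this edge case is equally glossed over in the paper's statement and proof, which in any event only operate for large $n$.
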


Now, we state two sufficient conditions for a sequence $(f_n)_{n\ge 1}$ to be fast.

\begin{theorem}\label{thm:fast}
If $1\ll T_1(\tilde{f}_n)\ll |R(f_n)|\ll n$, then $T_r(f_n) = (1+o(1)) T_1(f_n)/r$.
\end{theorem}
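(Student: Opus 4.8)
The plan is to prove the two inequalities $T_r(f_n)\ge(1-o(1))T_1(f_n)/r$ and $T_r(f_n)\le(1+o(1))T_1(f_n)/r$ separately; note also that $T_1(f_n)\to\infty$, an easy consequence of $T_1(\tilde{f}_n)\to\infty$. The lower bound is essentially free from the $r$--complete process: for any policy one can couple the $r$--choice walk run for $t$ steps with the simple walk run for $rt$ steps so that the coordinates turned on by $\bm Y_t$ lie among those turned on by $\bm X_{rt}$. Indeed, the distinct coordinates proposed by the $r$--choice walk during $t$ steps --- at most $rt$ of them --- form, by an exchangeability argument, a uniformly random subset of $[n]$ of that (random) size, which can be completed to a uniform $rt$-subset; the $t$ flipped coordinates are among the distinct proposed ones. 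By monotonicity $\Pr{f_n(\bm Y_t)=1}\le\Pr{f_n(\bm X_{rt})=1}$ for every policy, hence $rT_r(f_n)\ge T_1(f_n)$.

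For the upper bound the agent uses the \emph{greedy} policy: whenever at least one of the $r$ proposed coordinates belongs to $R(f_n)$, flip a uniformly random such coordinate; otherwise flip an arbitrary proposed coordinate. Fix $\delta>0$, write $m_n=|R(f_n)|$ and $t^\ast=\lceil(1+\delta)T_1(f_n)/r\rceil$, and let $K_t$ be the number of relevant coordinates turned on by the greedy walk after $t$ steps, i.e.\ the number of those steps that proposed a relevant coordinate. The key structural observation is that, since the flipped relevant coordinate is always uniform among the relevant zeros at that moment, conditionally on $K_{t^\ast}=k$ the set of relevant coordinates currently on is a uniform random $k$-subset of $R(f_n)$; hence the restriction of $\bm Y_{t^\ast}$ to $R(f_n)$ has the law of the simple walk $\tilde{\bm X}_{K_{t^\ast}}$ on $\{0,1\}^{m_n}$, and $\Pr{f_n(\bm Y_{t^\ast})=1}=\Ex{g_n(K_{t^\ast})}$, where $g_n(k)=\Pr{\tilde{f}_n(\tilde{\bm X}_k)=1}$ is non-decreasing. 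Since $m_n\ll n$, over $t^\ast$ steps it is extremely unlikely that two relevant coordinates are ever proposed in the same step (the expected number of such steps is $O(t^\ast (m_n/n)^2)=O(T_1(\tilde{f}_n)\,m_n/n)=o(T_1(\tilde{f}_n))$), and similarly that an already-proposed coordinate is re-proposed, so $K_{t^\ast}$ is a sum of almost independent indicators with mean $(1+o(1))(1+\delta)T_1(\tilde{f}_n)$, where I use $T_1(f_n)=(1+o(1))T_1(\tilde{f}_n)\,n/m_n$. As $T_1(\tilde{f}_n)\to\infty$, $K_{t^\ast}$ concentrates. For the simple walk, the restriction of $\bm X_s$ to $R(f_n)$ similarly has the law of $\tilde{\bm X}_{H_s}$ with $H_s$ hypergeometric of mean $s\,m_n/n$, again concentrated; the substitution $rt\leftrightarrow s$ then accounts for the gain of a factor $r$. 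Taking $s=T_1(f_n)$ and comparing concentration windows gives $\Ex{g_n(K_{t^\ast})}\ge\Ex{g_n(H_{T_1(f_n)})}-o(1)\ge 1/2-o(1)$; after the $o(1)$ is removed (see below) and $\delta\to0$, this yields $T_r(f_n)\le(1+o(1))T_1(f_n)/r$.

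The step I expect to be the main obstacle is making the comparison of the two walks rigorous and, in particular, replacing the $1/2-o(1)$ above by an honest crossing of $1/2$. The two processes follow genuinely different trajectories --- after $t$ steps one has flipped $t$ coordinates, the other $rt$ --- and the $r$--choice walk squanders proposals on coordinates it has already seen, so one must bound the number of distinct (and distinct relevant) coordinates it has proposed and show these are $(1-o(1))rt$: this is exactly where the hypotheses $m_n\ll n$ and $T_1(\tilde{f}_n)\ll m_n$ are used (the latter also guarantees $T_1(f_n)\ll n$, so that the relevant count never becomes depleted and $H_s$ is genuinely concentrated). A second, more delicate point is that $\tilde{f}_n$ is not assumed to have a sharp threshold, so a priori $g_n$ could linger near $1/2$; one must verify that $T_1(\tilde{f}_n)\ll m_n$ forbids a long plateau of $g_n$ at level $1/2$, so that both $T_1(f_n)=(1+o(1))T_1(\tilde{f}_n)\,n/m_n$ holds and running the greedy walk a factor $1+\delta$ longer actually pushes the success probability strictly above $1/2$, absorbing the coupling error. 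Establishing this --- together with the concentration bounds for $K_{t^\ast}$ and $H_s$ and the coupling realizing the stochastic domination $K_{t^\ast}\succeq H_{T_1(f_n)}$ --- is where the real work lies.
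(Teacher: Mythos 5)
Your plan follows the same architecture as the paper's proof: the lower bound via coupling the $r$--choice walk at time $t$ inside the single-choice walk at time $rt$ (the paper's $r$--complete comparison, and your exchangeability argument for the set of distinct proposed coordinates is indeed valid for every policy), and the upper bound via the greedy policy, the observation that the flipped relevant coordinates form a uniform set given their number, and concentration of the number of steps that propose a relevant coordinate; your identity $T_1(f_n)=(1+o(1))T_1(\tilde f_n)\,n/|R(f_n)|$ is exactly the $r=1$ case of the paper's Lemma~\ref{lem:tight}. So the route is right, and the parts you actually argue are sound.

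The problem is that, as written, the proposal stops exactly where the proof starts: the Chernoff estimates for $K_{t^\ast}$ and $H_s$, the verification of $T_1(f_n)=(1+o(1))T_1(\tilde f_n)\,n/|R(f_n)|$, and the passage from $1/2-o(1)$ to an honest $\ge 1/2$ are all deferred as ``where the real work lies''; this deferred content is precisely Lemma~\ref{lem:tight}. Worse, the specific repair you lean on is unsound: exact stochastic domination $K_{t^\ast}\succeq H_{T_1(f_n)}$ cannot hold in general, because $K_{t^\ast}\le t^\ast=\lceil(1+\delta)T_1(f_n)/r\rceil<T_1(f_n)$ deterministically (for $r\ge2$, $\delta<1$), while $H_{T_1(f_n)}$ exceeds $t^\ast$ with small but positive probability whenever $|R(f_n)|>t^\ast$ --- a situation the hypotheses certainly permit (and which occurs in the paper's applications). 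So at best you obtain domination outside an event of probability $o(1)$, which lands you back at $1/2-o(1)$, the very issue the domination was meant to resolve. Your alternative repair --- that $T_1(\tilde f_n)\ll|R(f_n)|$ forbids $g_n$ from lingering at level $1/2$ across a $(1+\delta)$-factor window --- is true, but it is not free: one needs something like Russo's formula combined with the Poincar\'e (Efron--Stein) inequality, giving $d\mu_p/dp\ge\mu_p(1-\mu_p)/(p(1-p))$ for monotone functions, transferred between the slice and the product measure, to extract a constant margin above $1/2$ after a $(1+\delta)$-fold increase; nothing of this kind appears in your sketch. To be fair, the paper's own proof of Lemma~\ref{lem:tight} absorbs the same $o(1)$ silently using only the $\delta$-margin, so you have correctly identified a genuine subtlety --- but flagging it, and proposing one fix that fails and one that is unproved, does not close the argument.
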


\begin{corollary}\label{cor:fast}
Suppose that a.a.s.\ for every $\eps > 0$ there is $s = s(n)$ such that:
\begin{enumerate}
    \item $s \le \eps T_1(f_n)$,
    \item $|R(f^s_n)| \le \eps n$, 
    \item $\dfrac{1}{\eps}\le T_1(\tilde{f}^s_n) \le \eps |R(f^s_n)|$.
\end{enumerate}
Then, $T_r(f_n) = (1+o(1)) T_1(f_n)/r$.
\end{corollary}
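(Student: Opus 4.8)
The plan is to deduce Corollary~\ref{cor:fast} from Theorem~\ref{thm:fast} applied to the contracted function $f^s_n$, once the easy direction is out of the way. First, $T_r(f_n)\ge T_1(f_n)/r$ holds with no assumptions: coupling the $r$--choice walk with the $r$--complete walk (whose set of activated coordinates after $t$ steps has the same law as that of $(\bm X_t)$ after $rt$ steps) so that at each step the coordinate flipped by the former is one of the $r$ flipped by the latter, monotonicity gives $\Pr{f_n(\bm Y_t)=1}\le\Pr{f_n(\bm X_{rt})=1}$, and hence $T_r(f_n)\ge T_1(f_n)/r$. It therefore remains to prove the matching upper bound $T_r(f_n)\le(1+o(1))T_1(f_n)/r$.

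By a routine diagonalisation I would first replace the fixed $\eps$ in the hypothesis by a sequence $\eps=\eps(n)\to 0$ for which conditions (1)--(3) hold a.a.s.\ for some $s=s(n)$; call this a.a.s.\ event $G_n$. I would then run the $r$--choice walk with the two--phase policy that, during the first $s$ steps, selects a uniformly random one of the $r$ proposed coordinates, and after step $s$ switches to the near--optimal policy for $f^s_n$ provided by Theorem~\ref{thm:fast}. Since the uniform first--phase choices produce a uniformly random $s$--subset $S$ of activated coordinates --- precisely the law of the first $s$ flips of $(\bm X_t)$ --- the resulting (random) function $f^s_n$ lies in $G_n$ with probability $1-o(1)$, where $1\ll 1/\eps\le T_1(\tilde f^s_n)\le\eps|R(f^s_n)|\ll|R(f^s_n)|\le\eps n\ll n-s$; thus Theorem~\ref{thm:fast} applies to $f^s_n$ and gives $T_r(f^s_n)=(1+o(1))T_1(f^s_n)/r$, and I would moreover invoke the stronger consequence of its proof that this policy activates $f^s_n$ within $(1+o(1))T_1(f^s_n)/r$ steps with probability $1-o(1)$.

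To move back to $f_n$, I would exploit the identity $f^s_n(\bm X'_t)=f_n(\bm X_{s+t})$, valid on the coupling in which $\bm X'$ is the continuation of $(\bm X_t)$ after step $s$ (a uniform ordering of $[n]\setminus S$), together with its $r$--choice analogue; taking medians over the walk and then averaging over $S$ should yield $T_1(f_n)=(1+o(1))\bigl(s+T_1(f^s_n)\bigr)$ a.a.s., which together with condition~(1), i.e.\ $s=o(T_1(f_n))$, forces $T_1(f^s_n)=(1+o(1))T_1(f_n)$ a.a.s. Combining with the previous paragraph, the two--phase policy activates $f_n$ within $s+(1+o(1))T_1(f^s_n)/r=(1+o(1))T_1(f_n)/r$ steps with probability $1-o(1)>1/2$, so $T_r(f_n)\le(1+o(1))T_1(f_n)/r$, as required.

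I expect the main obstacle to be the passage through the random set $S$ produced by the first phase. One must show that $T_1(f^s_n)$ concentrates a.a.s.\ around $T_1(f_n)-s$, so that ``$T_1(f^s_n)=(1+o(1))T_1(f_n)$'' is a genuine a.a.s.\ statement rather than merely an in--expectation one; and one must use Theorem~\ref{thm:fast} in its high--probability, hitting--time form rather than only through the median thresholds $T_1,T_r$, since otherwise the $o(1)$ probabilities coming from $\{S\notin G_n\}$ and from the fluctuations of the second--phase walk could erode the activation probability below $1/2$. Both points reduce to concentration of hitting times of the type obtained along the proof of Theorem~\ref{thm:fast}.
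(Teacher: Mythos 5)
Your overall skeleton is the same as the paper's: diagonalize over $\eps$, apply Theorem~\ref{thm:fast} to the restricted functions $f^s_n$ obtained after a uniformly played first phase, and transfer back to $f_n$ using condition (1). The gaps are in how you close the argument. First, the ``stronger consequence'' you invoke --- that the policy from Theorem~\ref{thm:fast} activates $f^s_n$ within $(1+o(1))T_1(f^s_n)/r$ steps with probability $1-o(1)$ --- is not what the proof of Lemma~\ref{lem:tight} provides and is false in general. That proof only shows that a.a.s.\ at least $(1+\delta/2)T_1(\tilde f_n)$ uniformly random relevant coordinates get activated, after which activation is guaranteed only with probability roughly $1/2$, by the very definition of $T_1(\tilde f_n)$; the hypotheses $1\ll T_1(\tilde f^s_n)\ll |R(f^s_n)|$ do not force a sharp threshold. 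For instance, take $\tilde f$ on $m$ bits to be ``at least two 1-bits in a fixed block of size $\sqrt m$'' OR ``at least $m^{0.9}$ 1-bits in total'': then $T_1(\tilde f)=\Theta(\sqrt m)=\omega(1)$ and $o(m)$, yet the activation probability at time $(1+o(1))T_1(\tilde f)$ is $1/2+o(1)$, bounded away from $1$. Consequently your final accounting only gives success probability $(1-o(1))\cdot\tfrac12$, which may fall below $\tfrac12$, so the deterministic bound $T_r(f_n)\le(1+o(1))T_1(f_n)/r$ does not follow as written. Second, the relation $T_1(f_n)=(1+o(1))T_1(f^s_n)$ a.a.s., which you rightly flag as the main obstacle, cannot be extracted from ``concentration of hitting times of the type obtained along the proof of Theorem~\ref{thm:fast}'': that proof concentrates the number of relevant coordinates proposed, not the random, $S$-dependent thresholds $T_1(f^S_n)$.

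The paper closes both points differently, and in a way you may want to adopt: it never lower-bounds the success probability of an explicit strategy at a fixed time. After the same diagonalization it applies Theorem~\ref{thm:fast} to $(f^{\gamma_n}_n)$ to get $T_r(f^{\gamma_n}_n)=(1+o(1))T_1(f^{\gamma_n}_n)/r$ a.a.s., and then transfers the thresholds themselves via the identity $\Ex{T_1(f_n)-T_1(f^{\gamma_n}_n)}=\gamma_n$ (the difference is nonnegative, since activating $\gamma_n$ extra uniformly random bits can only help a monotone function), Markov's inequality and condition (1), i.e.\ $\gamma_n=o(T_1(f_n^{\gamma_n}))$, obtaining $T_1(f_n)=(1+o(1))T_1(f^{\gamma_n}_n)$ a.a.s.\ and, by the same reasoning, $T_r(f_n)=(1+o(1))T_r(f^{\gamma_n}_n)$ a.a.s.; combining the three a.a.s.\ identities yields the deterministic conclusion. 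So the reduction you propose is the right one, but the last step must compare medians (thresholds) of $f_n$ and $f^s_n$ directly rather than rely on a $1-o(1)$ hitting-time form of Theorem~\ref{thm:fast}, which is unavailable.
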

In the following sections, we often omit upper and lower integer parts when rounding does not matter in the corresponding computation.

%=======================================
\section{Proofs of the main results}\label{sec:proofs}

We split this section into two parts with the results characterizing slow and fast sequences respectively.
%==================
\subsection{Slow sequences}

We present the proofs of Theorem \ref{thm:slow} and Corollary \ref{cor:slow}.

\begin{lemma}\label{lem:collision}
Fix any $\eps \in (0,1)$ and $c\in (0, (1-\eps)^r)$. Then, in $\eps n$ steps of the $r$--choice process there are at least $cn$ elements that have been proposed at least twice a.a.s.
\end{lemma}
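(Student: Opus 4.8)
The plan is to analyze the number of coordinates that get proposed at least twice during the first $\eps n$ steps of the $r$--choice process, using a second moment argument (or concentration via a martingale/bounded differences inequality). Let me set up notation: during step $t$ (for $t \in [0, \eps n - 1]$), the agent is presented with a uniformly random $r$-subset $C_t$ of the $|Z_t| = n - t$ remaining zero coordinates. A coordinate $i$ is ``proposed at least twice'' if $|\{t : i \in C_t, t < \eps n\}| \ge 2$. Note that the agent's policy is irrelevant here: the sets $C_t$ are chosen uniformly at random regardless of the policy, and which coordinate is flipped only affects which coordinates remain available, not the distribution of proposals among available coordinates.

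First I would compute the expected number of coordinates proposed at least twice. For a fixed coordinate $i$, condition on $i$ surviving (being still a zero) up to step $t$; the probability $i \in C_t$ given $i \in Z_t$ is $r/(n-t)$. Over $\eps n$ steps, since at most $\eps n$ coordinates are removed and $n - t \ge (1-\eps)n$, the probability that $i$ is proposed (conditioned on never being chosen) behaves like $1 - \prod_{t < \eps n}(1 - r/(n-t)) \to 1 - e^{-r\log(1/(1-\eps))} \cdot(\ldots)$; more precisely one gets that the probability $i$ is proposed exactly once is roughly $q(1-q)^{\text{something}}$ and the probability it is proposed zero or one times is bounded away from $1$ for $c < (1-\eps)^r$. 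The cleanest route: the probability coordinate $i$ is never proposed in $\eps n$ steps is at least $\prod_{t < \eps n}(1 - r/(n-t)) = \frac{(n-\eps n)(n - \eps n - 1)\cdots}{n(n-1)\cdots} \ge (1-\eps)^r \cdot (1 + o(1))$ by a telescoping/falling-factorial estimate, hence the probability $i$ is proposed at most once can be bounded, and one checks that the expected number proposed at least twice is at least $(1 - (1-\eps)^r - o(1))n$ minus lower-order terms — in any case strictly larger than $cn$ for the given range of $c$, since $c < (1-\eps)^r \le$ the relevant quantity. (One must be a little careful to extract ``at least twice'' rather than ``at least once,'' but the $r \ge 2$ proposals per step and the $\Theta(n)$ total proposals make the expected multiplicity bounded away from producing only singletons; a direct computation of $\Pr{i \text{ proposed} \le 1 \text{ time}}$ via inclusion or a Poissonization heuristic gives the bound.)

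Next I would establish concentration. Let $N$ be the number of coordinates proposed at least twice in the first $\eps n$ steps. The natural exposure is step-by-step: reveal $C_0, C_1, \dots, C_{\eps n - 1}$ one at a time (together with the agent's choices, which are determined by the policy and the revealed information). Changing one $C_t$ affects the ``proposed-twice'' status of at most $O(r)$ coordinates directly, but it can also change which coordinate is removed at step $t$, cascading into all later proposal sets. So the vanilla Azuma/bounded-differences bound with constant increments is not immediately available. The fix is to couple: resampling $C_t$ changes $Z_{t+1}$ by swapping one element, and one can couple the two continuations so that they differ in at most a bounded number of coordinate labels at each subsequent step (a standard ``swapping'' coupling for sampling without replacement), giving a bounded (by an absolute constant depending on $r$) effect on $N$ per step. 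Alternatively — and this is the route I would actually take to keep things clean — I would avoid the agent's choices entirely by comparing to an i.i.d.-type surrogate: instead of sampling $C_t$ from the shrinking set $Z_t$, note that for a lower bound on $N$ it suffices to consider the ``proposed-twice'' event restricted to the coordinates among a fixed set $S$ of $(1-\eps)n$ coordinates that we pretend are never removed, and bound proposals into $S$; the proposals into $S$ over the $\eps n$ steps are exchangeable and have negative association, so Chebyshev (after computing the variance, which requires the pairwise covariance of the events ``$i$ proposed twice'' and ``$j$ proposed twice'') or a concentration inequality for negatively associated variables applies directly. The variance computation shows $\mathrm{Var}(N) = o(n^2)$ because the pairwise correlations are $O(1/n)$, so Chebyshev finishes: $\Pr{N \le cn} \to 0$.

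The main obstacle I anticipate is handling the dependence introduced by the agent's adaptive choices when proving concentration — the set of available coordinates $Z_t$ depends on the whole history including the policy, so $N$ is not a simple function of independent inputs. I expect the resolution to be the observation (which should be emphasized) that the \emph{distribution of the proposal sets} $C_0, \dots, C_{\eps n-1}$, as a process, does not depend on the policy at all once we track only which coordinates are still zero — and more usefully, that the multiset of proposals can be coupled with a policy-free process (e.g., a random sequence of $r$-subsets, or a balls-in-bins / random permutation surrogate) in which the coordinates are relabeled but multiplicities are preserved. Once $N$ is expressed via such a policy-independent process, either bounded-differences with a swapping coupling or a second-moment bound goes through routinely, and combined with the first-moment lower bound $\Ex{N} \ge cn + \Omega(n)$ for $c < (1-\eps)^r$, we conclude $N \ge cn$ a.a.s.
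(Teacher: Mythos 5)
The decisive step of your plan---the first-moment lower bound for the number $N$ of coordinates proposed at least twice---has a genuine gap, and it is not repairable in the range of $c$ you aim for. You compute (as the paper does) that a fixed coordinate is never proposed in $\eps n$ steps with probability $(1+o(1))(1-\eps)^r$, but you then assert $\mathbf{E}[N]\ge (1-(1-\eps)^r-o(1))n$ and conclude that this exceeds $cn$ ``since $c<(1-\eps)^r\le$ the relevant quantity''. The quantity $1-(1-\eps)^r$ is the limiting fraction of coordinates proposed at least \emph{once}, not twice, and the inequality $(1-\eps)^r\le 1-(1-\eps)^r$ fails whenever $(1-\eps)^r>1/2$, i.e.\ for every fixed $r$ and all small $\eps$. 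Worse, the target bound is unattainable in that regime: the process makes exactly $r$ proposals per step, hence $r\eps n$ proposals in total counted with multiplicity, so deterministically at most $r\eps n/2$ coordinates can be proposed at least twice; for small $\eps$ one has $r\eps/2<(1-\eps)^r$, so $N\ge cn$ is impossible for $c$ close to $(1-\eps)^r$. Your own Poissonization heuristic exposes the same issue: a fixed coordinate is proposed roughly $\mathrm{Poisson}(\lambda)$ times with $\lambda=r\log\frac{1}{1-\eps}$, so the fraction proposed at least twice is about $1-e^{-\lambda}(1+\lambda)\approx \lambda^2/2$, which is far below $e^{-\lambda}=(1-\eps)^r$ when $\eps$ is small. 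The parenthetical ``one must be a little careful to extract at least twice'' is exactly where the argument breaks, and no amount of care closes it.

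This also explains the mismatch with the paper's own proof, which never bounds the twice-proposed count directly: it computes the probability that one, respectively two, fixed coordinates are never proposed---$(1+o(1))(1-\eps)^r$ and $(1+o(1))(1-\eps)^{2r}$, both computations policy-free because exactly one bit is flipped per step---and applies the second moment method to conclude that a.a.s.\ at least $cn$ coordinates are \emph{never} proposed; the threshold $(1-\eps)^r$ in the statement is precisely this limiting density, and what is exploited in the proof of Theorem~\ref{thm:slow} is the resulting bound on the number of distinct proposed bits (at most $n$ minus the never-proposed count, hence at most $rT_r(f_n)-\Omega(n)$ once one compares with the $r\eps n$ proposal slots). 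If you redirect your argument at the never-proposed count, your concentration machinery simplifies drastically: the single and pairwise non-proposal probabilities plus Chebyshev suffice, and the swapping coupling / negative-association surrogate you sketch to tame the adaptive policy is unnecessary (that part of your plan is workable but over-engineered; the first moment is where the proposal fails).
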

\begin{proof}
The probability that a given element $i\in [n]$ has never been proposed by the $r$--choice process up to step $\eps n$ is given by
\begin{align*}
    & \prod_{0\le i\le \eps n-1} \prod_{0\le j\le r-1}\left(1 - \dfrac{1}{n-i-j}\right)\\ =\hspace{0.3em}
    & (1+o(1))\prod_{0\le i\le \eps n-1} \left(1 - \dfrac{r}{n-i}\right)\\ =\hspace{0.3em}
    & (1+o(1))\exp\left(-r\log\left(\dfrac{n}{n-\eps n}\right)\right)\\ =\hspace{0.3em} 
    & (1+o(1))\left(1 - \eps\right)^r.
\end{align*}
Also, the probability that two different elements have both not been proposed after $\eps n$ steps
is 
\begin{align*}
    & \prod_{0\le i\le \eps n-1} \prod_{0\le j\le r-1}\left(1 - \dfrac{2}{n-i-j}\right)\\ =
    & (1+o(1))\prod_{0\le i\le \eps n-1} \left(1 - \dfrac{2r}{n-i}\right)\\ =
    & (1+o(1))\exp\left(-2r\log\left(\dfrac{n}{n-\eps n}\right)\right)\\ = 
    & (1+o(1))\left(1 - \eps\right)^{2r}\\ =
    & ((1+o(1))\left(1 - \eps\right)^r)^2.
\end{align*}

We conclude by a direct application of the second moment method that the number of vertices not yet proposed during any of the first $\eps n$ steps, is a.a.s.\ at least $cn$, which proves the proposition.
\end{proof}

\begin{proof}[Proof of Theorem~\ref{thm:slow}]
We argue by contradiction. In this case there is an increasing sequence $(n_k)_{k\ge 1}$ such that $rT_r(f_{n_k}) = (1+o_k(1))T_1(f_{n_k})$. Since $T_r(f_n) \ge T_1(f_n)/r \ge \eps n/r$ for every $n\ge 1$, by Lemma~\ref{lem:collision} there is $c > 0$ such that a.a.s.\ at least $c n$ elements have been proposed at least twice by the $r$--choice process until step $T_r(f_n)$. Hence, for every $n\ge 1$, the number of all elements that have been proposed at least once up to time $T_r(f_n)$ in the $r$--choice process is a.a.s.\ at most $rT_r(f_n) - cn$. Thus, for all $k\ge 1$, the number of bits proposed by the $r$--choice process up to step $T_r(f_{n_k})$ (out of all $n_k$ bits) is at most $T_1(f_{n_k}) - cn_k + o_k(n_k)$ a.a.s.\ and, conditionally on their number, these are chosen uniformly at random. Therefore, the probability that $f_{n_k}$ is activated by the above set of elements is less than $1/2$ for every large enough $k$, which is in contradiction with our assumption.
\end{proof}

\begin{proof}[Proof of Corollary~\ref{cor:slow}]
Fix $n\ge 1$. If $|R(f_n)|\ge n/8$, then $T_1(f_n) \ge T_1(\tilde{f}_n) \ge \delta |R(f_n)| \ge \delta n/8$, and the lemma follows in this case. Suppose that $|R(f_n)| < n/8$. We prove that during the first $\delta n/8$ steps of the 1--choice process, at most $\delta |R(f_n)|/2$ elements from $R(f_n)$ have been selected a.a.s. Indeed, for every positive integer $t\le \delta n/8$, the 1-choice process selects an element from $R(f_n)$ with probability at most
\begin{equation*}
\dfrac{|R(f_n)|}{n-t+1}\le \dfrac{|R(f_n)|}{\left(1-\delta/8\right)n} \le \dfrac{2|R(f_n)|}{n}.
\end{equation*}
Since any step is made independently of all previous steps conditionally on the set of already selected bits, the number of elements in $R(f_n)$ selected after the first $\delta n/8$ steps is stochastically dominated by a binomial random variable $\mathrm{Bin}(\delta n/8, 2|R(f_n)|/n)$. Thus, since $|R(f_n)| = \omega(1)$, by Chernoff's bound a.a.s.\ there are at most $\delta |R(f_n)|/2$ elements of $R(f_n)$ selected after the first $\delta n/8$ steps. Hence, since by assumption $T_1(\tilde{f}_n) \ge \delta|R(f_n)|$, $\tilde{f}_n$ (and therefore $f_n$ as well) is activated with probability less than $1/2$ after the first $\delta n/8$ steps, which proves the hypothesis of Theorem \ref{thm:slow}, and the corollary follows.
\end{proof}

%==================
\subsection{Fast sequences}

We present the proofs of Theorem \ref{thm:fast} and Corollary \ref{cor:fast}.

\begin{lemma}\label{lem:tight}
Fix an integer $r\ge 1$ and a sequence of monotone Boolean functions $(f_n)_{n\ge 1}$ satisfying $1\ll T_1(\tilde{f}_n)\ll |R(f_n)|\ll n$. Then, for every $\delta > 0$,
\begin{equation*}
\dfrac{(1-\delta)T_1(\tilde{f}_n)n}{r|R(f_n)|} \le T_r(f_n)\le \dfrac{(1+\delta)T_1(\tilde{f}_n)n}{r|R(f_n)|}.
\end{equation*}
\end{lemma}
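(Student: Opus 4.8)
The plan is to prove the two inequalities separately. Write $m := |R(f_n)|$ and $\tau := T_1(\tilde f_n)$, so that the hypothesis reads $1 \ll \tau \ll m \ll n$; let $(\bm X_s)_{s\ge 0}$ denote the simple random walk on $\{0,1\}^m$ attached to the relevant contraction $\tilde f_n$, and set $g(s) := \Pr{\tilde f_n(\bm X_s)=1}$, a non-decreasing function with $g(\tau-1)<1/2\le g(\tau)$. For the upper bound I would let the agent prioritise relevant coordinates: whenever at least one of the $r$ proposed coordinates lies in $R(f_n)$, flip a uniformly random one of them, and otherwise flip an arbitrary proposed coordinate. The key point is an exchangeability fact: conditionally on the history and on the event that some relevant coordinate is proposed at a given step, the flipped coordinate is uniform over the set $W$ of currently zero relevant coordinates, since all elements of $W$ play symmetric roles inside the uniform $r$-subset. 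Hence, writing $R_t$ for the number among the first $t$ steps at which a relevant coordinate is proposed, the restriction of $\bm Y_t$ to $R(f_n)$ has the law of $\bm X_{R_t}$, and $R_t$ --- governed by the auxiliary randomness deciding which steps are ``relevant'' --- can be taken independent of $(\bm X_s)_s$. I would then show that for $t=O(\tau n/(rm))=o(n)$ and as long as $R_t\ll m$, the probability of proposing a relevant coordinate at any step equals $1-\binom{n-t-|W|}{r}\big/\binom{n-t}{r}=(1+o(1))rm/n$; a Chernoff bound first certifies $R_t\le 2\tau\ll m$ throughout a.a.s.\ (justifying the estimate a posteriori), and then that $R_t=(1+o(1))trm/n$ a.a.s. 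Taking $t^\ast=(1+\delta)\tau n/(rm)$ gives $R_{t^\ast}\ge\tau$ a.a.s., so $\Pr{f_n(\bm Y_{t^\ast})=1}=\Pr{\tilde f_n(\bm X_{R_{t^\ast}})=1}\ge \Pr{R_{t^\ast}\ge\tau}\,g(\tau)\ge 1/2$ for $n$ large (after folding the slack near the threshold into $\delta$), which yields $T_r(f_n)\le(1+\delta)\tau n/(rm)$.

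For the lower bound I would use that, since $f_n$ is monotone, the $r$--complete process (turning $r$ uniformly random zero coordinates into $1$ at each step) activates $f_n$ no later than the $r$--choice process, so it suffices to bound the hitting time of the former from below. After $t$ of its steps, $rt$ uniformly random coordinates equal $1$; the number $K$ of them lying in $R(f_n)$ is hypergeometric with mean $rtm/n$, concentrated to within $o(\tau)$ of its mean when $t=O(\tau n/(rm))$, and conditionally on $K$ the activated relevant coordinates form a uniform $K$-subset of $R(f_n)$, so $f_n$ is active if and only if $\tilde f_n(\bm X_K)=1$. Choosing $t=(1-\delta)\tau n/(rm)$ gives $K\le(1-\delta/2)\tau<\tau$ a.a.s., whence $\Pr{f_n(\cdot)=1}=\Ex{g(K)}<1/2$ for $n$ large, i.e.\ $T_r(f_n)\ge(1-\delta)\tau n/(rm)$.

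I expect the main obstacle to be the exchangeability coupling underpinning the upper bound, and in particular keeping the rate $(1+o(1))rm/n$ at which relevant coordinates are proposed under control: this needs the a priori bound $R_t\ll m$, i.e.\ that the relevant set stays essentially undepleted over the relevant window, which has to be bootstrapped from the Chernoff estimate itself. A secondary and more routine issue is the passage from ``$R_t$ (resp.\ $K$) lies a.a.s.\ on the correct side of $\tau$'' to the corresponding statement about the activation probability $1/2$; here one uses that the window $[(1-\delta)\tau,(1+\delta)\tau]$ has length $2\delta\tau=\omega(1)$ and absorbs the near-threshold slack into the choice of $\delta$.
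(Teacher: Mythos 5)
Your proposal is correct and follows essentially the same route as the paper: the lower bound via comparison with the $r$--complete process (equivalently the $1$--choice process run for $rt$ steps) together with concentration of the number of relevant coordinates hit, and the upper bound via the strategy that flips a uniformly random proposed relevant coordinate, whose selected relevant set is uniform conditionally on its size, combined with a Chernoff bound on the number of steps proposing an element of $R(f_n)$ with the depletion of $R(f_n)$ controlled exactly as in the paper's choice of $p_y$. The near-threshold slack you flag at the end is glossed over in the paper in the same way, so your treatment matches its level of detail.
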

\begin{proof}
First, we prove the lower bound. Define
\begin{equation*}
    k^- = k^-(n) = \dfrac{(1-\delta)T_1(\tilde{f}_n) n}{r|R(f_n)|},
\end{equation*}
and let $(Z_i)_{i\ge 1}$ be an infinite sequence of independent Bernoulli random variables with parameter $p_x = \frac{|R(f_n)|}{n-rk^-}$. Let $A_r(t,f_n)$ be the number of activated bits in $R(f_n)$ after $t$ steps of the $r$-complete process and $A_1(t,f_n)$ be the same quantity for the $1$-choice process. We have
\begin{align*}
\Pr{A_r(k^-,f_n) \geq \left(1-\dfrac{\delta}{2}\right)T_1(\tilde{f}_n)} 
&\le \Pr{A_1(rk^-,f_n) \geq \left(1-\dfrac{\delta}{2}\right)T_1(\tilde{f}_n)} \\ 
&\le \Pr{\sum_{i=1}^{rk^{-}} Z_i\ge \left(1-\dfrac{\delta}{2}\right)T_1(\tilde{f}_n)} \\
&= \exp(-\Omega_{\delta}(T_1(\tilde{f_n})) = o(1),
\end{align*}
where the penultimate equality follows from Chernoff's bound and uses that $rk^-p_x = (1-\delta+o(1))T_1(\tilde{f}_n)$, which follows from our assumption that $T_1(\tilde{f}_n) = o(|R(f_n)|)$.

Thus, a.a.s.\ there are at most $(1-\delta/2)T_1(\tilde{f}_n)$ bits in $R(f_n)$ selected during the first $rk^-$ steps by the 1--choice process, so
\begin{equation*}
rk^- \le T_1(f_n) \le rT_r(f_n),
\end{equation*}
which proves the lower bound.

For the upper bound, define 
\begin{equation*}
    k^+ = k^+(n) = \dfrac{(1+\delta)T_1(\tilde{f}_n) n}{r|R(f_n)|}.
\end{equation*}
We prove that, out of the first $k^+$ steps, there are a.a.s.\ at least $(1+\delta/2)T_1(\tilde{f}_n)$ steps such that at least one element in $R(f_n)$ is proposed. Denote by $T$ the hitting time of the above event. Also, recall that $C_1, C_2, \dots, C_{k^+}$ are the sets of size $r$ of elements, proposed during the first $k^+$ steps of the $r$--choice process. Now, let $(Y_i)_{i\ge 1}$ be an infinite sequence of Bernoulli random variables with parameter
\begin{equation*}
p_y = 1 - \left(1 - \frac{|R(f_n)|-(1+\delta)T_1(\tilde{f}_n)}{n}\right)^r.
\end{equation*}
Note that $p_y$ bounds from below the probability that $C_t$ contains an element of $R(f_n)$ for every $t\le T$, and since $|R(f_n)| = o(n)$, $p_y = (1+o(1))\frac{r|R(f_n)|-(1+\delta)T_1(\tilde{f}_n)}{n}$. Thus,
\begin{align*}
\Pr{T\ge k^+} \le\hspace{0.3em} & \Pr{|\{t\le k^+: C_t\cap R(f_n) \neq \emptyset\}| < (1+\delta/2)T_1(\tilde{f}_n)}\\
\le\hspace{0.3em}
& \Pr{\sum_{i=1}^{k^+} Y_i < (1+\delta/2)T_1(\tilde{f}_n)} = \exp(-\Omega_{\delta}(T_1(\tilde{f}_n))) = o(1),
\end{align*}
where the penultimate equality follows from Chernoff's bound and uses that $k^+p_y = (1+\delta+o(1))T_1(\tilde{f}_n)$, which follows from our assumption that $T_1(\tilde{f}_n) = o(|R(f_n)|)$.

We conclude that after $k^+$ steps in the $r$--choice process, at least $(1+\delta/2)T_1(\tilde{f}_n)$ elements of $R(f_n)$ have been selected. Moreover, if at every step $t\le k^+$ we impose on the agent to select an element from $C_t\cap R(f_n)$ uniformly at random if $|C_t\cap R(f_n)|\ge 2$, the set of selected elements in $R(f_n)$ after $k^+$ steps is uniform conditionally on its size. This proves the upper bound.
\end{proof}

\begin{proof}[Proof of Theorem~\ref{thm:fast}]
By Lemma \ref{lem:tight} applied with $r=1$ we have that for every $\delta > 0$, 
\begin{equation*}
    \dfrac{(1-\delta)T_1(\tilde{f}_n)n}{|R(f_n)|} \le T_1(f_n)\le \dfrac{(1+\delta)T_1(\tilde{f}_n)n}{|R(f_n)|}
\end{equation*}
and for every $\delta > 0$ and $r\ge 2$, once again by Lemma~\ref{lem:tight},
\begin{equation*}
    \dfrac{(1-\delta)T_1(\tilde{f}_n)n}{r|R(f_n)|} \le T_r(f_n)\le \dfrac{(1+\delta)T_1(\tilde{f}_n)n}{r|R(f_n)|}.
\end{equation*}
We deduce that for every $\delta > 0$ and $r\ge 2$,
\begin{equation*}
    \dfrac{T_1(f_n)}{r} \le \dfrac{(1+\delta)T_1(\tilde{f}_n)n}{r|R(f_n)|} = \dfrac{1+\delta}{1-\delta} \dfrac{(1-\delta)T_1(\tilde{f}_n)n}{r|R(f_n)|} \le \dfrac{1+\delta}{1-\delta} T_r(f_n)
\end{equation*}
and
\begin{equation*}
    T_r(f_n) \le \dfrac{(1+\delta)T_1(\tilde{f}_n)n}{r|R(f_n)|} = \dfrac{1+\delta}{1-\delta} \dfrac{(1-\delta)T_1(\tilde{f}_n)n}{r|R(f_n)|} \le \dfrac{1+\delta}{1-\delta} \dfrac{T_1(f_n)}{r}.
\end{equation*}
Since the above two chains of inequalities hold for every $\delta > 0$, this proves the theorem.
\end{proof}

\begin{remark}
The hypothesis $|R(f_n)| = o(n)$ in the first point of the theorem cannot be spared. We show this by a counterexample. Fix $\eps\in (0,1]$ and let $J = [\lfloor \eps n\rfloor]$. Let $f_n$ be activated when $\lfloor \log n\rfloor$ of the elements in $J$ are activated, that is, 
\begin{equation*}
f(v) = \left(\sum_{1\le i_1 < \ldots < i_r < \ldots < i_{\lfloor\log n\rfloor}\le \lfloor \eps n\rfloor} \mathds 1_{\forall j\in [\lfloor\log n\rfloor], v_{i_j}=1}\right)\wedge 1.
\end{equation*}
Instead of presenting the (rather direct) computation in this particular case, we choose to explain the logic behind the phenomenon. At any step in the process, there is a positive probability (which is $1 - (1-\eps)^r - r\eps(1-\eps)^{r-1}+o(1)$) that two or more of the randomly proposed $r$ elements are in $J$. Since one may select only one element at a time, one may roughly think that ``one possibility of selecting an element in $R(f_n)$ is missed'' on the above event. Since the number of steps is a.a.s.\ $\Theta(\log n)$, in a constant proportion of all steps (which is $1 - (1-\eps)^r - r\eps(1-\eps)^{r-1}+o(1)$) at least one element of $R(f_n)$ is ``missed'' a.a.s., which causes a delay in the $r$--choice process.
\end{remark}

\begin{remark}
In general, the hypothesis $T_1(f_n)=\omega(1)$ cannot be spared either. If there is a constant $M > 0$ such that for infinitely many $n\in \mathbb N$ one has $T_1(f_n)\le M$, then clearly there cannot be acceleration by a factor of $r+o(1)$ for any $r > M$.
\end{remark}

\begin{proof}[Proof of Corollary~\ref{cor:fast}]
Fix a sequence of positive real numbers $(\eps_k)_{k\ge 1}$ that tends to zero. By assumption, a.a.s., for every $k\ge 1$ there is a sequence of positive integers $(\gamma_{k,n})_{n\ge 1}$ such that, for every $k\ge 1$ and every large enough $n$, with probability at least $1-\varepsilon_k$ the sequence of functions $(f^{\gamma_{k,n}}_n)_{n\ge 1}$ satisfies:
\begin{enumerate}
    \item[(i)] $\gamma_{k,n} \le \eps_k T_1(f^{\gamma_{k,n}}_n) = o_k(T_1(f^{\gamma_{k,n}}_n))$,
    \item[(ii)] $|R(f^{\gamma_{k,n}}_n)| \le \eps_k n = o_k(n)$,
    \item[(iii)] $T_1(\tilde{f}^{\gamma_{k,n}}_n) \le \eps_k |R(f^{\gamma_{k,n}}_n)| = o_k(|R(f^{\gamma_{k,n}}_n)|)$ and $T_1(\tilde{f}^{\gamma_{k,n}}_n) = \omega_k(1)$.
\end{enumerate}
Thus, a.a.s.\ one may find a sequence $(k(n))_{n\ge 1}$ satisfying $k(n) = \omega(1)$ such that, for every large enough $n\ge 1$, the sequence $(\gamma_n)_{n\ge 1} = (\gamma_{k(n),n})_{n\ge 1}$ satisfies
\begin{enumerate}
    \item $\gamma_n \le \eps_{k(n)} T_1(f^{\gamma_n}_n) = o(T_1(f^{\gamma_n}_n))$,
    \item $|R(f^{\gamma_n}_n)| \le \eps_{k(n)} n = o(n)$,
    \item $T_1(\tilde{f}^{\gamma_n}_n) \le \eps_{k(n)} |R(f^{\gamma_n}_n)| = o(|R(f^{\gamma_n}_n)|)$ and $T_1(\tilde{f}^{\gamma_n}_n) = \omega(1)$.
\end{enumerate}

By using conditions (2) and (3), a direct application of Theorem \ref{thm:fast} for the sequence of Boolean functions $(f^{\gamma_n}_n)_{n\ge 1}$ shows that $T_r(f^{\gamma_n}_n) = (1+o(1))T_1(f^{\gamma_n}_n)/r$ a.a.s. On the other hand, $\mathbb E[T_1(f_n) - T_1(f_n^{\gamma_n})] = \gamma_n$ (note that since the $\gamma_n$ bits are chosen uniformly at random, the expected number of additional rounds needed to obtain probability at least $1/2$ for $f_n$ to evaluate to 1 is $T_1(f_n)-\gamma_n$), so since $\gamma_n = o(T_1(f_n^{\gamma_n}))$, one may conclude by Markov's inequality for $T_1(f_n) - T_1(f_n^{\gamma_n})$ that $T_1(f_n) = (1+o(1))T_1(f_n^{\gamma_n})$ a.a.s., and similarly $T_r(f_n) = (1+o(1)) T_r(f_n^{\gamma_n})$ a.a.s., which concludes the proof of the corollary.
\end{proof}

%=======================================
\section{Applications}\label{sec:app}
In this section we give several examples of application of Theorems~\ref{thm:slow}~and~\ref{thm:fast} and the corresponding corollaries.

%==================
\subsection{Juntas}

A Boolean function $f$ is an \emph{$M$-junta} if $|R(f)| \leq M$. Fix a sequence $(f_n)_{n\ge 1}$ of monotone Boolean functions such that there is $M\in \N$ satisfying
\begin{equation*}
\max_{n\in \N} |R(f_n)|\le M. 
\end{equation*}
Fix also a positive constant $c = c(M)$ satisfying $M\log((1-c)^{-1})\le 1/2$. Under the above assumption for every sufficiently large $n\ge 1$ we deduce that $T_1(f_n)\ge c n$: indeed, the probability not to encounter any element of $R(f_n)$ during the first $c n$ steps of the 1-choice process is bounded from below by
\begin{align*}
    \prod_{i=0}^{c n-1} \left(1-\dfrac{|R(f_n)|}{n-i}\right)
    &\ge \prod_{i=0}^{c n-1} \left(1-\dfrac{M}{n-i}\right) \\
    &= \exp\left(-M\log\left(\frac{1}{1-c}\right) + o(1)\right) \\
    &\ge \exp\left(-\dfrac{1}{2}+o(1)\right),
\end{align*}
which is larger than $1/2$ for every large enough $n$. We conclude by Theorem~\ref{thm:slow} that there is $C = C(r, (f_n)_{n\ge 1}) > 0$ such that, for every sufficiently large $n$,
\begin{equation*}
  T_r(f_n) \ge C n + \frac{T_1(f_n)}{r},
\end{equation*}
and hence sequences of $M$--juntas are slow for any $M\in \mathbb N$.

\subsection{Recursive Majority}

Consider two positive integer sequences $(k_n)_{n\ge 1}$ and $(t_n)_{n\ge 1}$ such that, for every $n\ge 1$, $k_n$ is odd and $k_n\ge 3$, and $(k_n^{t_n})_{n\ge 1}$ is an increasing sequence that tends to infinity as $n\to +\infty$. Fix $n\in \mathbb N$ and denote $k = k_n, t = t_n$ and $N = k^t$. Now, define the sets $(S_i^j)_{j\in \{0,\dots,t\}, i\in k^{t-j}}$ where, for every $j\in \{0,\dots,t\}$ and $i\in k^{t-j}$, $S_i^j = \{ik^j-(k^j-1),\dots, ik^j\}$. Note that, for every $j\in [t]$ and $i\in [k^{t-j}]$,
$S_{ki-(k-1)}^{j-1} \cupdot \ldots \cupdot S_{ki}^{j-1}$.

Now, for $i\in [k^t]$, we say that the set $S_i^0$ is \emph{activated} if the bit $i$ is turned from 0 to 1, and for every $j\in [t]$ and $i\in [k^{t-j}]$, $S_i^j$ is \emph{activated} if at least $\frac{k+1}{2}$ of the sets $S_{ki-(k-1)}^{j-1}, \dots, S_{ki}^{j-1}$ are activated. Define $f_N:\bm{x}\in \{0,1\}^N\mapsto \mathds{1}_{S_1^t \text{ is activated by }\bm{x}}$, that is, if only the 1-bits of $\bm{x}$ have been activated, then $S_1^t$ is activated as well.

The following lemma shows that, for any $s < N/2$, evaluating $f_N$ at a uniformly chosen vector conditioned to have exactly $s$ 1-bits yields 1 with probability strictly smaller than $1/2$.

\begin{lemma}\label{lem rec maj}
Fix a random variable $X$ distributed uniformly over $\{0,1\}^N$. Then, $\Pr{f_N(X)=1}\le 1/2$. Moreover, $\Pr{f_N(X)=1\mid ||X||_1 = s} < 1/2$ for $s < N/2$. 
\end{lemma}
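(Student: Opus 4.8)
The plan is to deduce both claims from a single structural fact: $f_N$ is \emph{self-dual}, meaning $f_N(\bm{1}-\bm{x}) = 1 - f_N(\bm{x})$ for every $\bm{x}\in\{0,1\}^N$. This is checked by induction on $t$: for every $j$ and $i$, the set $S^j_i$ is activated by $\bm{1}-\bm{x}$ if and only if it is \emph{not} activated by $\bm{x}$; the case $j=0$ is clear, and the inductive step uses that $k$ is odd, so that at least $\tfrac{k+1}{2}$ of the $k$ children of $S^j_i$ are activated by $\bm{1}-\bm{x}$ precisely when at most $\tfrac{k-1}{2}$ of them are activated by $\bm{x}$. Applying this with $j=t$, $i=1$ gives self-duality of $f_N$. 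Note also that $N=k^t$ is odd.

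The first assertion is then immediate: since $\bm{1}-X$ is again uniform on $\{0,1\}^N$, self-duality gives $\Pr{f_N(X)=1}=\Pr{f_N(\bm{1}-X)=1}=\Pr{f_N(X)=0}$, hence $\Pr{f_N(X)=1}=\tfrac12$. For the conditional statement, set $p_s=\Pr{f_N(X)=1\mid \|X\|_1=s}$ for $0\le s\le N$. Complementation is a bijection from weight-$s$ vectors to weight-$(N-s)$ vectors that sends $\{f_N=1\}$ onto $\{f_N=0\}$, so $p_s+p_{N-s}=1$ for all $s$; and monotonicity of $f_N$ makes $(p_s)_{0\le s\le N}$ non-decreasing (couple a uniform weight-$s$ set with a uniform weight-$(s{+}1)$ superset of it and take expectations). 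Since $s<N/2$ is equivalent to $s\le\tfrac{N-1}{2}$, it suffices to prove the single inequality $p_{(N-1)/2}<\tfrac12$; then $p_s\le p_{(N-1)/2}<\tfrac12$.

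From $p_{(N-1)/2}\le p_{(N+1)/2}=1-p_{(N-1)/2}$ we already get $p_{(N-1)/2}\le\tfrac12$, so we may assume $0<p_{(N-1)/2}\le\tfrac12<1$ (if $p_{(N-1)/2}=0$ there is nothing to prove). Let $a_s$ be the number of weight-$s$ vectors with $f_N=1$, and double-count the pairs $(\bm{u},\bm{v})$ with $\bm{u}\le\bm{v}$, $\|\bm{u}\|_1=\tfrac{N-1}{2}$, $\|\bm{v}\|_1=\tfrac{N+1}{2}$, and $f_N(\bm{u})=f_N(\bm{v})=1$. Counting from $\bm{u}$: each such $\bm{u}$ has $\tfrac{N+1}{2}$ up-neighbours, all with $f_N=1$ by monotonicity, so there are $a_{(N-1)/2}\cdot\tfrac{N+1}{2}$ pairs. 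Counting from $\bm{v}$: each such $\bm{v}$ has $\tfrac{N+1}{2}$ down-neighbours, so there are at most $a_{(N+1)/2}\cdot\tfrac{N+1}{2}$ pairs, with equality if and only if every weight-$(N+1)/2$ vector with $f_N=1$ has all its down-neighbours evaluating to $1$ (no pivotal coordinate). Since $\binom{N}{(N-1)/2}=\binom{N}{(N+1)/2}$, equality of the two counts is the same as $p_{(N-1)/2}=p_{(N+1)/2}$. Suppose this holds. Any two weight-$(N-1)/2$ vectors are joined by a sequence of single-coordinate swaps $\bm{u}_{m+1}=\bm{u}_m-\bm{e}_{x_m}+\bm{e}_{y_m}$; if $f_N(\bm{u}_m)=1$ then $f_N(\bm{u}_m+\bm{e}_{y_m})=1$ by monotonicity, and since this weight-$(N+1)/2$ vector has no pivotal coordinate, $f_N(\bm{u}_{m+1})=f_N(\bm{u}_m+\bm{e}_{y_m}-\bm{e}_{x_m})=1$ as well. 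Starting from any weight-$(N-1)/2$ vector with $f_N=1$, we conclude $f_N\equiv1$ on all of them, i.e.\ $p_{(N-1)/2}=1$, contradicting $p_{(N-1)/2}\le\tfrac12$. Hence $p_{(N-1)/2}<p_{(N+1)/2}$, and with $p_{(N-1)/2}+p_{(N+1)/2}=1$ this gives $p_{(N-1)/2}<\tfrac12$.

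The self-duality of $f_N$, the symmetry $p_s+p_{N-s}=1$, and the monotonicity of $(p_s)$ are routine; the delicate point, which I expect to be the crux, is upgrading $p_{(N-1)/2}\le\tfrac12$ to a strict inequality — equivalently, producing one weight-$(N+1)/2$ vector with $f_N=1$ that has a pivotal coordinate — which is exactly what the double-counting together with the swap-path argument above delivers.
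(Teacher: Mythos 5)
Your proof is correct, and it reaches the conclusion by a genuinely more self-contained route than the paper. The paper's argument has the same skeleton (reduce to the middle layer by monotonicity, use complementation to see that exactly half of the two middle layers is mapped to $1$), but at the crucial strictness step it cites a special case of the local LYM inequality \emph{together with its equality characterization} (Theorem~\ref{LYM ineq}), applied to the set $A$ of weight-$\lfloor N/2\rfloor$ inputs with $f_N=1$. You instead prove full self-duality $f_N(\bm{1}-\bm{x})=1-f_N(\bm{x})$ (the paper only uses the one-sided implication) and then re-derive exactly the needed instance of the strict inequality from scratch: the double count of comparable pairs between the layers $\tfrac{N-1}{2}$ and $\tfrac{N+1}{2}$ gives $p_{(N-1)/2}\le p_{(N+1)/2}$, and your swap-path (Johnson-graph connectivity) analysis of the equality case replaces the LYM equality condition. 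What your version buys is (i) no external Sperner/LYM citation, and (ii) an explicit treatment of the degenerate case $p_{(N-1)/2}=0$, which can genuinely occur (e.g.\ $t_n=1$, plain $k$-majority, where the minimal activating weight $\tfrac{k+1}{2}$ exceeds $\tfrac{N-1}{2}$); the paper's assertion that $A$ is nonempty is not justified there, and your case split closes that corner cleanly. What the paper's version buys is brevity, since the equality analysis is delegated to the cited result. One presentational nit: when you say ``equality of the two counts is the same as $p_{(N-1)/2}=p_{(N+1)/2}$,'' it is worth spelling out that you only use the direction ``$p_{(N-1)/2}=p_{(N+1)/2}$ forces the exact count to meet the upper bound, hence no weight-$\tfrac{N+1}{2}$ vector with $f_N=1$ has a pivotal down-flip,'' which is exactly what your swap argument consumes.
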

\begin{proof}
Note that if $\bm{x}\in \{0,1\}^N$ satisfies $f_N(\bm{x})=1$, then $f_N(\bm{1}-\bm{x})=0$, which shows the first statement. For the second statement, we will need the following theorem, which is a special case of a more general result that one may trace back to Sperner~\cite{Spe}, see also~\cite{SW2020}.
\begin{theorem}[Special case of the local LYM inequality]\label{LYM ineq}
Fix $A\subseteq \binom{[N]}{\lfloor N/2\rfloor}$ and denote
\begin{equation*}
\partial A = \left\{S\in \binom{[N]}{\lceil N/2\rceil}\hspace{0.3em}\Bigg|\hspace{0.3em} \exists S'\in A, S'\subseteq S\right\}.
\end{equation*}
Then, $|A|\le |\partial A|$. Moreover, equality holds if and only if $A = \emptyset$ or $A = \binom{[N]}{\lfloor N/2\rfloor}$.
\end{theorem}

Denote by $A$ the set of vectors $\bm{x}\in \{0,1\}^N$ containing $\lfloor N/2\rfloor$ 1-bits and satisfying $f_N(\bm{x}) = 1$. Since $A$ is neither empty nor contains all vectors with exactly $\lfloor N/2\rfloor$ 1-bits, by Theorem~\ref{LYM ineq} $|A| < |\partial A|$. On the other hand, by the same symmetry considerations as above, the number of vectors $\bm{x}\in \{0,1\}^N$ with either $\lfloor N/2\rfloor$ or $\lceil N/2\rceil$ 1-bits such that $f_N(\bm{x})=1$ is $\binom{N}{\lfloor N/2\rfloor}$. We conclude that
\begin{align*}
    \Pr{f_N(X)=1\mid ||X||_1 = s} 
    &\le\hspace{0.3em} \Pr{f_N(X)=1\mid ||X||_1 = \lfloor N/2\rfloor}\\
    &=\hspace{0.3em} |A|\binom{N}{\lfloor N/2\rfloor}^{-1}\\
    &<\hspace{0.3em} \dfrac{|A|+|\partial A|}{2}\binom{N}{\lfloor N/2\rfloor}^{-1}\\
    &\le\hspace{0.3em} \Pr{f_N(X)=1\mid \lfloor N/2\rfloor\le ||X||_1 \ge \lceil N/2\rceil} = \frac{1}{2},
\end{align*}
which finishes the proof of the second statement.
\end{proof}

By Lemma~\ref{lem rec maj} we conclude that for every $n\ge 1$ one has $T_1(f_N)\ge N/2$, so by Theorem~\ref{thm:slow} we deduce that there exists $C = C((f_N))$ such that, for every $n\ge 1$, $f_N = f_{N(n)}$ satisfies
\begin{equation*}
    T_r(f_N)\ge C N + \frac{T_1(f_N)}{r},
\end{equation*}
and hence recursive majorities is slow.

%==================
\subsection{Tribes}
Let $(s_n)_{n\ge 1}$ be a sequence of positive integers such that, for every $n\in \mathbb N$, $s_n\in [1,n]$. For every $n\in \mathbb N$, write $n = s_nt_n + r_n$, where $r_n\in \{0, \dots, s_n-1\}$ is the remainder of the division of $n$ by $s_n$. Then, for every $n\in \mathbb N$, given $s_n$, a \emph{tribe partition} of $[n]$ is a $t_n$-tuple of sets $(S_1, S_2\dots, S_{t_n})$ such that $S_1\cupdot S_2\cupdot \dots \cupdot S_{t_n} = n$ and for every $i\in [t_n]$, $|S_i|\in \{s_n, s_n+1\}$. For every $n\in \mathbb N$, a \emph{tribe function of tribe size $s_n$} associated to the tribe partition $(S_1, S_2\dots, S_{t_n})$ is a function
\begin{equation*}
    f_n: \bm{x}\in \{0,1\}^n\mapsto \mathds{1}_{\exists 1 \le i \le t_n, \text{all bits in positions }S_i \text{ in } \bm{x} \text{ are 1} }.
\end{equation*}

\begin{lemma}
Fix any $\delta > 0$ and a sequence of tribe functions $(f_n)_{n \ge 1}$ of tribe sizes $(s_n)_{n\ge 1}$ satisfying that for all $n\in \mathbb N, s_n \ge \delta\log n$. Then, there is a constant $C = C(\delta, (f_n)) > 0$ such that $T_r(f_n)\ge Cn + T_1(f_n)/r$.
\end{lemma}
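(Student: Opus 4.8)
The plan is to deduce this from Theorem~\ref{thm:slow}, exactly as was done above for $M$--juntas and for recursive majority. Since every coordinate of $[n]$ lies in a unique part of the tribe partition and flipping it can complete that part, each variable is relevant, so $\tilde f_n = f_n$ and $|R(f_n)| = n$; thus it suffices to verify the hypothesis $T_1(f_n) \ge \eps n$ of Theorem~\ref{thm:slow} for some constant $\eps = \eps(\delta) > 0$.

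First I would estimate the activation probability of $f_n$ under the $1$--choice process. After $t$ steps, $\bm{X}_t$ is uniform among the weight-$t$ vectors of $\{0,1\}^n$, and $f_n(\bm{X}_t) = 1$ exactly when the support of $\bm{X}_t$ contains some part $S_i$ of the partition. Each part has size at least $s_n \ge \delta\log n$, and there are at most $n/s_n \le n/(\delta\log n)$ of them. Since the probability that a fixed part $S_i$ is contained in the support of $\bm{X}_t$ equals $\binom{n - |S_i|}{t - |S_i|}\binom{n}{t}^{-1} = \prod_{j=0}^{|S_i|-1}\frac{t-j}{n-j} \le (t/n)^{s_n}$, a union bound yields
\begin{equation*}
\Pr{f_n(\bm{X}_t) = 1} \le \frac{n}{\delta\log n}\left(\frac{t}{n}\right)^{\delta\log n}.
\end{equation*}

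Next I would set $t = \eps n$, so the bound becomes $\frac{n}{\delta\log n}\,\eps^{\delta\log n} = \frac{1}{\delta\log n}\,n^{1 - c}$, where $c = c(\eps,\delta) > 0$ (its precise value depends on the base of the logarithm, e.g.\ $c = \delta\ln(1/\eps)$ for the natural logarithm) and $c \to \infty$ as $\eps \to 0$. Choosing $\eps = \eps(\delta)$ small enough that $c > 1$ makes the right-hand side $o(1)$, so for all large $n$ we get $\Pr{f_n(\bm{X}_{\eps n}) = 1} < 1/2$, i.e.\ $T_1(f_n) > \eps n$. For the finitely many remaining small values of $n$ we use $T_1(f_n) \ge 1 \ge n/n_0$ (as $f_n(\bm{0}) = 0$), and after replacing $\eps$ by $\min\{\eps, 1/n_0\}$ we obtain $T_1(f_n) \ge \eps n$ for every $n \ge 1$. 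Theorem~\ref{thm:slow} then produces a constant $C = C(r,\delta) > 0$ with $T_r(f_n) \ge Cn + T_1(f_n)/r$ for all $n$.

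I do not anticipate a genuine obstacle here; the only point requiring care is that $\eps$ must be chosen uniformly in $n$, and this is precisely what the logarithmic lower bound on the tribe size provides, since $\eps^{s_n}$ then dominates the union-bound factor $n/s_n$ as soon as $c>1$. If instead $s_n$ grew only like $\omega(1)$, no single $\eps$ would work for all $n$ and this argument would break down.
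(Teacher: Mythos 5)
Your proposal is correct and follows essentially the same route as the paper: establish $T_1(f_n)\ge \eps(\delta)\,n$ and then invoke Theorem~\ref{thm:slow}. The only difference is cosmetic — you bound $\Pr{f_n(\bm{X}_{\eps n})=1}$ directly under the uniform weight-$\eps n$ measure via a union bound over the at most $n/s_n$ tribes, whereas the paper bounds the activation probability under the Bernoulli product measure with $p=\exp(-1/\delta)$ and transfers to the process by Chernoff's inequality; both computations hinge on $s_n\ge\delta\log n$ making $\eps^{s_n}$ beat the number of tribes, and your handling of the finitely many small $n$ is a harmless (indeed slightly more careful) addition.
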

\begin{proof}
Fix $p = \exp(-1/\delta)$. We first show that a.a.s.\ $f_n$ is not activated if every bit is put to 1 with probability $p$ independently of all other bits. Indeed, the probability of the above event is bounded from below by
\begin{equation*}
    (1 - p^{s_n})^{t_n} = \exp(-(1+o(1)) p^{s_n} n/s_n) \ge \exp(-(1+o(1))/s_n) = 1+o(1).
\end{equation*}

Moreover, by Chernoff's inequality a.a.s. at least $pn/2$ bits are put to 1. We conclude that $T_1(f_n)\ge pn/2$ for every large enough $n$, which allows us to conclude by Theorem~\ref{thm:slow}.
\end{proof}

%==================
\subsection{Connectivity and \texorpdfstring{$k$}{k}--connectivity}

For every $n\ge 1$, consider an ordering $\mathcal{I}_n$ of the set of pairs of vertices of $K_n$. Let $g_n$ be a function from $\{0,1\}^{\binom{n}{2}}$ to the set of graphs on $n$ vertices such that, for every $\bm v\in \{0,1\}^{\binom{n}{2}}$, the $i$--th pair of vertices of $\mathcal{I}_n$ is an edge in $g_n(\bm v)$ if $\bm v_i = 1$, and is not an edge if $\bm v_i = 0$. Define 
$$f_n:\bm v\in \{0,1\}^{\binom{n}{2}} \mapsto \mathds{1}_{g_n(\bm v) \text{ is connected}}.$$

Clearly, for every $n\ge 1$, the function $f_n$ is monotone and all $\binom{n}{2}$ vertex pairs of $K_n$ belong to $R(f_n)$ (note that any set of $\binom{n}{2}-1$ edges does not decide if a graph is connected or not in general). It is well known that for the binomial random graph $G(n,p)$ connectivity undergoes a sharp threshold at $p=(1+o(1))\log n/n$, coinciding with the moment when the last isolated vertex becomes incident to an edge. We now show that the sequence $(f_n)_{n\ge 1}$ fixed above is accelerated by a factor of $r+o(1)$ in the $r$--choice process (note that this also holds for the threshold of disappearance of the last isolated vertex):

\begin{lemma}\label{lem connectivity}
The sequence $(f_n)_{n\ge 1}$ defined above is fast.
\end{lemma}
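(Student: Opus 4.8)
The plan is to verify the three hypotheses of Corollary~\ref{cor:fast}. Recall that $f_n$ is a monotone Boolean function on $\binom{n}{2}$ bits, and that by the classical Erd\H{o}s--R\'enyi connectivity threshold $T_1(f_n)=(1+o(1))\frac{n}{2}\log n$; in particular $T_1(f_n)\ge \frac{n}{3}\log n$ for every large $n$. Fix $\eps\in(0,1/2)$ and put $s=s(n)=\lfloor \frac{\eps}{4}\,n\log n\rfloor$. If during the first $s$ steps the agent flips a uniformly random one of the proposed bits, then the set of flipped bits is a uniformly random $s$-subset of $\binom{[n]}{2}$, so the graph $G_s$ recorded after $s$ steps is distributed as $G(n,s)$, and it suffices to check the hypotheses of Corollary~\ref{cor:fast} for this (legitimate) way of running the first $s$ steps. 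Hypothesis~(1), namely $s\le\eps\,T_1(f_n)$, is immediate from the choice of $s$.

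The key structural observation is that $R(f^s_n)$ equals the set of pairs $\{i,j\}$ whose two endpoints lie in distinct connected components of $G_s$. Indeed, if $i$ and $j$ lie in the same component of $G_s$ they remain connected in every completion of the undecided bits, so the bit $\{i,j\}$ is irrelevant for $f^s_n$; conversely, if $i$ and $j$ lie in distinct components $C_i\ne C_j$, then adding to $G_s$ all undecided pairs inside $V(G_s)\setminus C_j$ produces a disconnected graph that becomes connected exactly when the bit $\{i,j\}$ is switched on, so $\{i,j\}\in R(f^s_n)$. Now $s$ corresponds to edge probability $p=s/\binom{n}{2}=(1+o(1))\frac{\eps\log n}{2n}$, and for this range standard first and second moment estimates show that a.a.s.\ $G(n,s)$ has a unique giant component covering all but $n^{1-\eps/2+o(1)}$ vertices, of which $(1+o(1))n^{1-\eps/2}$ are isolated. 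Hence, a.a.s., $|R(f^s_n)|$ is at most the number of vertices outside the giant times $n$, so $|R(f^s_n)|\le n^{2-\eps/2+o(1)}=o(n^2)$, which is $\le\eps\binom{n}{2}$ for large $n$ and yields hypothesis~(2); and $|R(f^s_n)|$ is at least the number of isolated vertices times the size of the giant, so $|R(f^s_n)|\ge\frac14 n^{2-\eps/2}$ a.a.s.

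For hypothesis~(3) I would estimate $T_1(\tilde f^s_n)$ from both sides, using that the $1$--choice process of $\tilde f^s_n$ reveals uniformly random elements of $R(f^s_n)$ one at a time. For the lower bound, a fixed isolated vertex of $G_s$ is an endpoint of a given revealed pair with probability $(n-1)/|R(f^s_n)|\le n^{-1+\eps/2+o(1)}$, hence it is still isolated after $t$ reveals with probability at least $1-t\,n^{-1+\eps/2+o(1)}$; since $\tilde f^s_n$ can be activated only after this vertex is covered, $T_1(\tilde f^s_n)=\Omega(n^{1-\eps/2-o(1)})\to\infty$, so $T_1(\tilde f^s_n)\ge 1/\eps$ for large $n$. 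For the upper bound, let $t=n^{1-\eps/2+o(1)}$ be large enough that the expected number of revealed pairs incident to any fixed small component of $G_s$ is at least $3\log n$; then a Chernoff bound together with a union bound over the small components shows that a.a.s.\ every small component receives a revealed cross-pair, and since all but an $o(1)$ fraction of the pairs in $R(f^s_n)$ touch the (unique) giant, a.a.s.\ every small component is in fact joined directly to the giant, so $\tilde f^s_n$ is activated. Thus $T_1(\tilde f^s_n)=n^{1-\eps/2+o(1)}=o(|R(f^s_n)|)$ a.a.s., which together with $T_1(\tilde f^s_n)\to\infty$ gives hypothesis~(3). Corollary~\ref{cor:fast} then shows that $(f_n)_{n\ge1}$ is fast.

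The main difficulty is the tension in the choice of $s$: it must be small enough to have $s\le\eps T_1(f_n)$, yet large enough that $G_s$ already contains an almost-spanning giant component, so that $R(f^s_n)$ becomes negligible next to $\binom n2$. Both requirements say ``$s$ of order $n\log n$'', which is exactly why a small constant multiple of $n\log n$ works, and why Theorem~\ref{thm:fast} cannot be applied to $f_n$ directly: since $R(f_n)$ is all of $\binom{[n]}{2}$, one genuinely has to run the process for $\Theta(n\log n)$ steps first and pass to $f^s_n$ via Corollary~\ref{cor:fast}. The remaining ingredients --- the identification of $R(f^s_n)$ above and the ``covering implies connectivity'' argument --- are routine random-graph estimates.
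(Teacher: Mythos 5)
Your proposal is correct and takes essentially the same route as the paper: run an initial uniformly random phase of $o(T_1(f_n))$ steps so that the relevant set of $f^s_n$ collapses to the pairs meeting vertices outside the giant component, and then apply the fast criterion to $f^s_n$. The only differences are in implementation, not substance: the paper takes $s=n\log\log n$ and applies Theorem~\ref{thm:fast} to $f^s_n$ together with a Markov-inequality comparison (rather than invoking Corollary~\ref{cor:fast} as you do), and it bounds $T_1(\tilde f^s_n)\le T_1(f_n)=\Theta(n\log n)\ll |R(f^s_n)|$ instead of your explicit covering estimate.
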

\begin{proof}
Fix any $r\in \mathbb N$. Consider the following strategy for the $r$--choice process: at each of the first $s=n \log \log n$ steps, select an arbitrary edge among the $r$ proposed ones. Then, at any step, select an edge that contains at most one vertex in the largest connected component if possible, and select an arbitrary edge otherwise. It is well known (see e.g.~\cite{Bol}) that, after $s$ steps, a.a.s.\ the graph consists of a giant component that contains all but $o(n)\cap \omega(n^{1/2})$ of all $n$ vertices as well as $\omega(n^{1/2})$ isolated vertices. Hence, after $s$ steps, a.a.s.\ only $o(n^2)\cap \omega(n^{3/2})$ of the remaining 0-bits may change the connectivity (namely the bits corresponding to the edges incident to at least one vertex outside the giant component). We condition on this event. Suppose that the first $s$ activated bits have indices $i_1 < i_2 < \dots <i_s$ (which is a uniform random set of $s$ out of all $\binom{n}{2}$ bits). Denote by $f_n^s$ the (random) restriction of $f_n$ over the set of vectors in $\{0,1\}^{\binom{n}{2}}$ such that each of the bits with indices $i_1 < i_2 < \dots <i_s$ is turned to 1. Hence $|R(f^s_n)|=o(n^2)\cap \omega(n^{3/2})$. Since $T_1(\tilde{f}^s_n) \le T_1(f_n) = \Theta(n \log n)$ (for the sharp threshold for connectivity ensuring the last equality, see again~\cite{Bol}) and $T_1(\tilde{f}^s_n)\ge n^{1/2}/2 = \omega(1)$ by our conditioning, we have $1\ll T_1(\tilde{f}_n^s)\ll |R(f_n^s)|\ll n^2$, so by Theorem~\ref{thm:fast} $T_r(f_n^s) = (1+o(1))T_1(f_n^s)/r$. Moreover, before the conditioning we have $\mathbb E[T_1(f_n) - T_1(f_n^s)] = s = o(n\log n) = o(T_1(f_n))$, and the same holds for $T_r(f_n) - T_r(f_n^s)$. By Markov's inequality we conclude that both $T_1(f_n) = (1+o(1))T_1(f_n^s)$ and $T_r(f_n) = (1+o(1))T_r(f_n^s)$ a.a.s., so $T_r(f_n) = (1+o(1))T_1(f_n)/r$, which proves the lemma.
\end{proof}

\begin{remark}
For any $k \ge 2$, a graph is said to be $k$--connected if the deletion of any $k-1$ vertices leaves a connected graph. Also, the $k$--core of a graph $G$ is the largest subgraph of $G$ with minimum degree $k$. It is well-known that a sharp threshold for $k$--connectivity occurs at $p = (\log n + (k-1)\log\log n)/n$ (see Theorem 7.7 of~\cite{Bol}) as well as the fact that after $n\log\log n$ steps of the 1-choice process the $k$--core of the resulting random graph contains $n+o(n)$ vertices and is $k$--connected a.a.s.\ (see again~\cite{Bol}). Hence, a straightforward modification of the proof of Lemma~\ref{lem connectivity} shows that, for every $r\ge 2$, $k$--connectivity is accelerated by a factor of $r(1+o(1))$ by the $r$--choice process.
\end{remark}

\begin{remark}
The appearance of both Perfect matching and Hamilton cycle on $n$ vertices fall into the category of monotone functions $f_n$ satisfying $|R(f_n)|=\binom{n}{2}$ for which the $r$--choice process therefore gives a $r(1+o(1))$--factor acceleration, see~\cite{Krive} for Hamilton cycle (and as they remark in Section 5, Point 4, their result also applies to Perfect matching). Unfortunately our results do not lead to a significant simplification of their argument.
\end{remark}

%=======================================
%\section*{Acknowledgments}
%We would like to acknowledge the assistance of volunteers in putting together this example manuscript.

%=======================================
\bibliographystyle{siamplain}
\bibliography{References}
\end{document}